\title{Growth description  of $p$th means of the Green potential in the unit ball}
\author{I. Chyzhykov,  M. Voitovych}
\newtheorem{thm}{Theorem}
\newtheorem{cor}[thm]{Corollary}
\newtheorem{prop}[thm]{Proposition}
\newtheorem{atheorem}{Theorem}
\newtheorem{ex}[thm]{Example}
\newtheorem{lem}{Lemma}
\newtheorem{alem}{Lemma}
\newtheorem{rem}{Remark}
\begin{document}
\maketitle

\begin{abstract}
  We describe the growth of $p$th means, $1<p<\frac{2n-1}{2(n-1)}$, of the invariant  Green potential in the unit ball in $\mathbb{C}^n$ in terms of smoothness properties of a measure.
  In particular, a criterion of boundedness  of $p$th means of the potential is obtained, a result of M. Stoll is generalized.
  
  \medskip 
\noindent Keywords: Green potential, unit ball, invariant Laplacian,  $M$-subharmonic function,  Riesz measure

\noindent 2010 Subject classification:  31B25,  31C05.
  \end{abstract}

Firstly, we introduce some definitions and basic notation (\cite{stoll}).
For $n\in \Bbb N$, let $\mathbb{C}^n$ denote the $n$-dimensional complex space with the inner product
$$\langle z,w\rangle=\sum_{j=1}^nz_j\overline{w}_j,~z,w\in\mathbb{C}^n.$$ Let $B=\{z\in\mathbb{C}^n:|z|<1\}$ be the unit ball
and $S=\{z\in\mathbb{C}^n:|z|=1\}$ be the unit sphere, where $|z|=\sqrt{\langle z,z\rangle}$.

For $z,w\in B$, define the \emph{involutive automorphism} $\varphi_w$ of the unit ball $B$ given by
$$\varphi_w(z)=\frac{w-P_wz-(1-|w|^2)^{1/2}Q_wz}{1-\langle z,w\rangle}$$
where $P_0z=0,~P_wz=\frac{\langle z,w\rangle}{|w|^2}w,~w\neq 0$, is the orthogonal projection of $\mathbb{C}^n$ onto the subspace generated by $w$ and $Q_w=I-P_w$.

The invariant Laplacian $\tilde \Delta$ on $B$ is defined by
$$\tilde \Delta f(a)=\Delta(f\circ \varphi_a)(0), $$
where  $f\in C^2(B)$, $\Delta$ is the ordinary Laplacian. It is known that $\tilde \Delta$ is invariant w.r.t. any holomorphic automorphism of $B$ (\cite[Chap.4]{rud}, \cite{stoll}).

The Green's  function for the invariant Laplacian is defined by $G(z,w)=g(\varphi_w(z))$, where $g(z)=\frac{n+1}{2n}\int_{|z|}^1(1-t^2)^{n-1}t^{-2n+1}dt$ (\cite[Chap.6.2]{stoll}).

If $\mu$ is a nonnegative  Borel measure on $B$, the function $G_\mu$ defined by
$$G_\mu(z)=\int_BG(z,w)d\mu(w)$$ is called the \emph{(invariant) Green potential} of $\mu$, provided $G_\mu\not\equiv+\infty.$
It is known that (\cite[Chap.6.4]{stoll}) the last condition is equivalent to
\begin{equation}\label{e:conver}
  \int_B(1-|w|^2)^nd\mu(w)<\infty .
\end{equation}

The concept of the invariant Laplacian naturally implies the following definition (\cite{stoll}, \cite{ulrich}). A function $u$ on B is called \emph{$\mathcal{M} $-sub\-har\-mo\-nic} if it is upper semicontinuos and $\tilde \Delta u\ge 0$ in the sense of distributions. In particular, $-G_\mu$ is $\mathcal{M} $-subharmonic. A function $u$ on B is called \emph{$\mathcal{M} $-harmonic} if $u\in C^2(B)$ and $\tilde \Delta u= 0$.
Unlike the class of plurisubharmonic functions a counterpart of the Riesz decomposition theorem holds  for the class of $\mathcal{M} $-subharmonic functions (see \cite{ulrich}, \cite{stoll}). Due to this theorem, an $\mathcal{M}$-subharmonic function $u$ with the norm $\|u(r\xi)\|_{L^1(S)}$  uniformly bounded on $[0,1)$ can be represented as the difference  of an $\mathcal{M}$-harmonic function and the Green potential of a nonnegative measure satisfying \eqref{e:conver}.   Thus investigations of the Green potentials are very important  in studying the whole class of $\mathcal{M}$-subharmonic functions. Note that  in the case $n=1$ the classes of  $\mathcal{M}$-subharmonic functions and subharmonic functions coincide.

Let $0<p<\infty$, $u$ be a measurable function locally integrable on $B$. We define
$$m_p(r,u)=\left(\int_S\left|u(r\xi)\right|^pd\sigma(\xi)\right)^{\frac{1}{p}}$$
where $d\sigma$ is the Lebesgue measure on $S$ normalized so that $\sigma(S)=1$.

The aim of the paper is to \emph{describe the growth (decrease) of $m_p(r, G_\mu)$ in terms of properties of the measure $\mu$}.
 In the case $n=1$, $p=2$,   this is closely connected to the question   of A. Zygmund, who asked on criterion of boundedness of  $m_2(r,\log|B|)$,  where $B$ is a Blaschke product.  G. MacLane and L. Rubel in \cite{MR} answered his question. 
 Corollary \ref{c:bounded} below is a boundedness criterion for $m_p(r,u)$, $1<p<\frac{2n-1}{2(n-1)}$.
  
  In the case $n>1$, sharp estimates of the growth rate of $m_p(r, G_\mu)$ for the whole class of Borel measures satisfying \eqref{e:conver} are proved by M.~Stoll in \cite{stoll2}.

\begin{atheorem}[\cite{stoll2}] \label{t:a}
   Let $G_\mu$ be the Green potential on $B$.

(1)  If $1\leq p<\frac{2n-1}{2(n-1)}$, then
\begin{equation}\label{e:small_p}
  \lim_{r\to 1-}(1-r^2)^{n(1-1/p)}m_p(r,G_\mu)=0.
\end{equation}

(2)  If $n\geq 2$ and $\frac{2n-1}{2(n-1)}\leq p<\frac{2n-1}{2n-3}$, then
\begin{equation}\label{e:big_p}
\liminf_{r\to 1-}(1-r^2)^{n(1-1/p)}m_p(r,G_\mu)=0.
\end{equation}
\end{atheorem}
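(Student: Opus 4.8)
The plan is to reduce the statement to a pointwise-in-$w$ bound for the potential of a single unit mass and then integrate against $\mu$ via \eqref{e:conver}. Since $G\ge0$, Minkowski's integral inequality gives $m_p(r,G_\mu)\le\int_B m_p(r,G(\cdot,w))\,d\mu(w)$, so it suffices to estimate $m_p(r,G(\cdot,w))=\bigl(\int_S g(\varphi_w(r\xi))^p\,d\sigma(\xi)\bigr)^{1/p}$. The tools I would use are: the identity $1-|\varphi_w(z)|^2=\dfrac{(1-|z|^2)(1-|w|^2)}{|1-\langle z,w\rangle|^2}$; the two-sided estimate $g(z)\asymp(1-|z|^2)^n|z|^{2-2n}$ for $n\ge2$, obtained by differentiating the integral defining $g$ (for $n=1$ one has $g(z)=\log\frac1{|z|}$ instead); and the Forelli--Rudin estimate $\int_S|1-\langle\zeta,\eta\rangle|^{-(n+c)}\,d\sigma(\zeta)\asymp(1-|\eta|^2)^{-c}$ for $c>0$, $\eta\in B$.

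The core step is to split $S=E_1\cup E_2$ with $E_2=\{\xi:|\varphi_w(r\xi)|<\tfrac12\}$; by the identity, $E_2$ is the non-isotropic cap about $w/|w|$ where $|1-r\langle\xi,w\rangle|^2<\tfrac43(1-r^2)(1-|w|^2)$, and two applications of the arithmetic--geometric inequality show $E_2\ne\varnothing$ forces $1-r^2\asymp1-|w|^2$. On $E_1$ one has $g(\varphi_w(r\xi))^p\lesssim(1-r^2)^{np}(1-|w|^2)^{np}|1-r\langle\xi,w\rangle|^{-2np}$, so Forelli--Rudin (with $\eta=rw$, $c=n(2p-1)>0$) bounds $\int_{E_1}g^p\,d\sigma$ by $(1-r^2)^{np}(1-|w|^2)^{np}(1-r^2|w|^2)^{-n(2p-1)}$. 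On $E_2$ I would use $g(\varphi_w(r\xi))\asymp(1-|\varphi_w(r\xi)|^2)^n|\varphi_w(r\xi)|^{2-2n}$ together with the exact expansion $|\varphi_w(r\xi)|^2|1-r\langle\xi,w\rangle|^2=(r-|w|)^2+2r|w|(1-r|w|)\operatorname{Re}(1-\langle\xi,w/|w|\rangle)+r^2|w|^2|1-\langle\xi,w/|w|\rangle|^2$ (a sum of three nonnegative terms), and pass to Heisenberg-type coordinates $(\zeta',t)\in\mathbb C^{n-1}\times\mathbb R$ near $w/|w|$, for which $\operatorname{Re}(1-\langle\xi,w/|w|\rangle)\asymp|\zeta'|^2$, $|1-\langle\xi,w/|w|\rangle|\asymp(|\zeta'|^4+t^2)^{1/2}$, $d\sigma\asymp d\zeta'\,dt$. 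After the rescaling $\zeta'=(1-r^2)^{1/2}\eta$, $t=(1-r^2)s$, the cap integral becomes a power of $1-r^2$ times $\int_{|y|\lesssim1,\,y\in\mathbb R^{2n-1}}\bigl((r-|w|)^2+(1-r^2)^2|y|^2\bigr)^{-(n-1)p}\,dy$; at $r=|w|$ this converges iff $2(n-1)p<2n-1$, i.e.\ $p<\frac{2n-1}{2(n-1)}$. I expect this integrability estimate on the cap --- dictated by the anisotropy of the invariant metric at the boundary --- to be the main obstacle. (For $n=1$, $g$'s logarithmic singularity is in $L^p(S)$ for every $p$, so no restriction arises and part~(2) is vacuous.)

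For part (1), when $1\le p<\frac{2n-1}{2(n-1)}$ the $E_2$-term is dominated by the $E_1$-term and one gets $m_p(r,G(\cdot,w))\lesssim\dfrac{(1-r^2)^n(1-|w|^2)^n}{(1-r^2|w|^2)^{n(2-1/p)}}$ for all $r\in(0,1)$, $w\in B$. Hence $(1-r^2)^{n(1-1/p)}m_p(r,G(\cdot,w))\lesssim(1-|w|^2)^n\bigl(\tfrac{1-r^2}{1-r^2|w|^2}\bigr)^{n(2-1/p)}\le(1-|w|^2)^n$, which is an $L^1(d\mu)$ majorant by \eqref{e:conver} and, for each fixed $w$, tends to $0$ as $r\to1^-$ (since $2-1/p>0$); Minkowski's bound and dominated convergence then give \eqref{e:small_p}.

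For part (2), when $\frac{2n-1}{2(n-1)}\le p<\frac{2n-1}{2n-3}$ and $n\ge2$, the $E_2$-analysis yields instead an extra summand, $m_p(r,G(\cdot,w))\lesssim\dfrac{(1-r^2)^n(1-|w|^2)^n}{(1-r^2|w|^2)^{n(2-1/p)}}+\mathbf 1_{\{1-r^2\asymp1-|w|^2\}}(1-r^2)^{2n-2-(n-1)/p}|r-|w||^{-\gamma}$, where $\gamma=2(n-1)-\frac{2n-1}{p}$; the hypothesis on $p$ is exactly $\gamma\in[0,1)$, and $(1-r^2)^{n(1-1/p)}$ times the extra summand equals $(1-r^2)^n\bigl(\tfrac{1-r^2}{|r-|w||}\bigr)^\gamma$. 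Fix $\varepsilon>0$ and split $\mu=\mu'+\mu''$ with $\operatorname{supp}\mu'\subset\{|w|\le\rho\}$ and $\int_B(1-|w|^2)^n\,d\mu''<\varepsilon$. For $\mu'$ the cap is absent for $r$ near $1$, so as in (1) $(1-r^2)^{n(1-1/p)}m_p(r,G_{\mu'})\to0$; the main-term contribution of $\mu''$ times $(1-r^2)^{n(1-1/p)}$ is $\lesssim\int_B(1-|w|^2)^n\,d\mu''<\varepsilon$ for every $r$. For the extra-summand contribution of $\mu''$, Tonelli and $\gamma<1$ give $\int_0^1\bigl[(1-r^2)^{n(1-1/p)}\!\!\int_B(\text{extra})(r,w)\,d\mu''(w)\bigr]\tfrac{dr}{1-r}\lesssim\int_B\Bigl(\int_{1-r^2\asymp1-|w|^2}(1-r)^{n+\gamma-1}|r-|w||^{-\gamma}\,dr\Bigr)d\mu''(w)\lesssim\int_B(1-|w|^2)^n\,d\mu''<\varepsilon$, the inner integral being over an interval of length $\asymp1-|w|$ around $|w|$, where (because $\gamma<1$) it equals $\asymp(1-|w|)^n$. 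Since $\int_0^1\tfrac{dr}{1-r}=\infty$, the $\liminf$ as $r\to1^-$ of this nonnegative integrand is $0$; adding the $\lesssim\varepsilon$ main-term bound gives $\liminf_{r\to1^-}(1-r^2)^{n(1-1/p)}m_p(r,G_\mu)\lesssim\varepsilon$, and letting $\varepsilon\to0$ yields \eqref{e:big_p}.
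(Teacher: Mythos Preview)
Your approach is essentially correct and is, in outline, Stoll's original argument from \cite{stoll2}: reduce via Minkowski to the single-mass estimate $m_p(r,G(\cdot,w))$, split the sphere according to $|\varphi_w(r\xi)|\lessgtr\tfrac12$, handle the far piece by Forelli--Rudin, and analyse the near piece (the cap) in Heisenberg-type coordinates. One small point: at the endpoint $p=\tfrac{2n-1}{2(n-1)}$ (i.e.\ $\gamma=0$) the cap integral picks up a factor $\log\bigl((1-r^2)/|r-|w||\bigr)$ rather than $|r-|w||^{0}$; this is harmless in your $\int_0^1(\cdot)\,\tfrac{dr}{1-r}$ step, but should be mentioned.

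The paper's route is genuinely different and narrower in scope. Theorem~A is quoted from \cite{stoll2}; the paper does not reprove it directly. Instead, it derives part~(1) (only for $p>1$) as a corollary of its own Theorem~\ref{t:smallo} and Proposition~\ref{p:smooth}: Proposition~\ref{p:smooth} says that for any finite Borel measure $\nu$ on $B$ one has $\bigl(\int_S\nu^p(C(\xi,\delta))\,d\sigma\bigr)^{1/p}=o(\delta^{n/p})$, and applying this with $\nu=\lambda$ feeds the smoothness hypothesis \eqref{e:osmooth} with $\gamma=n/p$ into Theorem~\ref{t:smallo} to give \eqref{e:small_p}. Part~(2) is not re-derived at all in the paper.

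So the trade-off is this: your direct argument is self-contained, covers both parts (including $p=1$ in part~(1)), and avoids the machinery of the characterization theorems; the paper's detour through Theorem~\ref{t:smallo} requires that machinery (the $u_1/u_2$ decomposition, Lemma~\ref{l:cn}, the dyadic estimate of $u_2$) but yields the much finer two-sided description of $m_p(r,G_\mu)$ in terms of the smoothness of $\lambda$, of which Theorem~A(1) is then the crudest instance.
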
%\textbf{Theorem A}.

The examples in \cite{stoll2} show that the estimates are the best possible in some sense.
We note that similar results for the Green potentials in the unit ball in $\mathbb{R}^n$ are established earlier in \cite{Gard88}  (cf. \cite{Sto83}). For the recent development in the real case we address to the book \cite{stoll16}. The case $n=1$ is studied much more deeper, see e.g. \cite{Li_mp}, \cite{Li_mp1}.

\begin{rem}
 Under additional restrictions on the measure $\mu$ estimates of growth rate $m_p(r, G_\mu)$ can be improved.
\end{rem}

\begin{atheorem}[{\cite[Theorem 3.2]{stoll92}}] \label{t:b}
 Let $\mu$ be a Borel measure on $B$ satisfying
\begin{equation*}%\label{e:beta_conv}
 \int_B (1-|w|^2)^\beta d\mu(w)<\infty
\end{equation*}
 for some real $
\beta \le n$.
\end{atheorem}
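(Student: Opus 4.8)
Under the strengthened moment hypothesis $\beta\le n$ the natural target is the counterpart of Theorem~\ref{t:a}(1): for $1\le p<\frac{2n-1}{2(n-1)}$ (possibly with a further restriction linking $p$ and $\beta$),
\[
\lim_{r\to1-}(1-r^2)^{\beta-n/p}\,m_p(r,G_\mu)=0,
\]
which collapses to \eqref{e:small_p} when $\beta=n$. The plan is to follow the two-step scheme underlying Theorem~\ref{t:a} (see \cite{stoll2}): first estimate the $p$th mean of the Green kernel with a \emph{single} pole, keeping the dependence on $|w|$ explicit, then integrate in $w$ against $\mu$ using the moment hypothesis. The reduction is Minkowski's integral inequality, legitimate because $G\ge0$:
\[
m_p(r,G_\mu)\le\int_B m_p\big(r,G(\cdot,w)\big)\,d\mu(w).
\]

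For the one-pole bound, note first that $m_p(r,G(\cdot,w))$ is finite for \emph{every} $r$ precisely when $p<\frac{2n-1}{2(n-1)}$: for $r=|w|$ the pole lies on the integration sphere $\{|z|=r\}$, where $G(\cdot,w)$ has a Euclidean singularity $\asymp|z-w|^{2-2n}$, which belongs to $L^p$ of the $(2n-1)$-dimensional sphere iff $(2n-2)p<2n-1$. To get the \emph{size}, I would use the two regimes of the radial profile: from $g(z)=\frac{n+1}{2n}\int_{|z|}^1(1-t^2)^{n-1}t^{1-2n}\,dt$ one has $g(z)\asymp(1-|z|^2)^n$ near $S$ and $g(z)\asymp|z|^{2-2n}$ near $0$ (with $\log\frac1{|z|}$ when $n=1$), together with $1-|\varphi_w(z)|^2=\frac{(1-|z|^2)(1-|w|^2)}{|1-\langle z,w\rangle|^2}$. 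Split $S$ according to whether $|\varphi_w(r\xi)|\le\frac12$ or $>\frac12$. On the set $|\varphi_w(r\xi)|>\frac12$ one has $G(r\xi,w)\lesssim\big(\frac{(1-r^2)(1-|w|^2)}{|1-\langle r\xi,w\rangle|^2}\big)^n$, so its contribution to $m_p(r,G(\cdot,w))^p$ is $\lesssim(1-r^2)^{np}(1-|w|^2)^{np}\int_S|1-\langle r\xi,w\rangle|^{-2np}\,d\sigma(\xi)$; the Forelli--Rudin estimate (\cite[Prop.~1.4.10]{rud}; here $2np>n$ since $p>\frac12$) gives $\int_S|1-\langle r\xi,w\rangle|^{-2np}\,d\sigma(\xi)\asymp(1-r|w|)^{n-2np}$, whence $m_p(r,G(\cdot,w))\lesssim(1-r^2)^n(1-|w|^2)^n(1-r|w|)^{n/p-2n}$ from this part. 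The complementary set (where $r\xi$ is close to $w$) is empty unless $1-|w|\lesssim1-r$, since $|1-\langle r\xi,w\rangle|\ge1-r|w|\ge\sqrt{(1-r^2)(1-|w|^2)}$ forces $(1-r|w|)^2\lesssim(1-r^2)(1-|w|^2)$; on it one integrates the Newtonian singularity of $g\circ\varphi_w$ over a Koranyi cap and, using the finiteness noted above, obtains an extra term supported on the shell $1-|w|\lesssim1-r$.

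It remains to insert the one-pole bound into the Minkowski estimate and split $B=\{1-|w|\ge1-r\}\cup\{1-|w|<1-r\}$. On the first set $1-r|w|\asymp1-|w|$, so the integrand is $\asymp(1-r^2)^n(1-|w|^2)^{n/p-n}$; factoring $(1-|w|^2)^{n/p-n}=(1-|w|^2)^\beta(1-|w|^2)^{n/p-n-\beta}$ and bounding the second factor by $(1-r)^{n/p-n-\beta}$ yields $\lesssim(1-r^2)^{n/p-\beta}\int_B(1-|w|^2)^\beta\,d\mu(w)$. On the shell $1-r|w|\asymp1-r$, and the analogous bookkeeping (now using $(1-|w|^2)^{n-\beta}\le(1-r)^{n-\beta}$, since $\beta\le n$, together with the Newtonian term) again produces $(1-r^2)^{n/p-\beta}$ times the $\mu$-mass of the shell. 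To upgrade this $O$-bound to the claimed little-$o$, decompose $\mu=\mu_1+\mu_2$ with $\mu_1$ compactly supported and $\int_B(1-|w|^2)^\beta\,d\mu_2<\varepsilon$: for $\mu_1$ one has $G_{\mu_1}(z)\le C_\varepsilon(1-|z|^2)^n$, which is $o\big((1-|z|^2)^{n/p-\beta}\big)$ in the range of $\beta$ where $n/p-\beta<n$, while the $\mu_2$-part is $\le C\varepsilon(1-r^2)^{n/p-\beta}$; letting first $r\to1$ and then $\varepsilon\to0$ gives the limit $0$.

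I expect the middle step to be the main obstacle: obtaining the sharp size of $m_p(r,G(\cdot,w))$ uniformly in $w$, and in particular handling the region in which the pole $w$ sits on (or very near) the integration sphere $\{|z|=r\}$ --- this is where the restriction $p<\frac{2n-1}{2(n-1)}$ is forced, below which the $p$th mean of a single Green kernel is already $+\infty$ --- and then checking that, after integration in $w$, the powers of $1-r$ produced by the ``bulk'', the ``shell'' and the Newtonian contributions combine exactly to the exponent $n/p-\beta$ rather than to something larger. Everything after that --- the splitting $\mu=\mu_1+\mu_2$ and the two limit passages needed to turn $O$ into $o$ --- is routine.
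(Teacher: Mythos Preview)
Your argument is correct and is essentially the original route taken by Stoll in \cite{stoll92}: reduce by Minkowski's inequality to a one-pole estimate for $m_p(r,G(\cdot,w))$, obtain the latter via the far/near splitting and the Forelli--Rudin integral, and then upgrade $O$ to $o$ by the standard compact-plus-small-tail decomposition of $\mu$. The bookkeeping you describe produces the exponent $n/p-\beta$ exactly in the stated range $-n(1-1/p)<\beta\le n$.

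The paper's proof is entirely different and much shorter, because it is designed to exhibit Theorem~B(1) as a \emph{corollary} of the paper's own machinery. Setting $d\nu=(1-|w|)^\beta\,d\mu$, one observes that on $C(\xi,\delta)$ one has $(1-|w|)^n\le C\delta^{n-\beta}(1-|w|)^\beta$, whence $\lambda(C(\xi,\delta))\le C\delta^{n-\beta}\nu(C(\xi,\delta))$. Proposition~\ref{p:smooth} applied to the finite measure $\nu$ gives $\bigl(\int_S\nu^p(C(\xi,\delta))\,d\sigma(\xi)\bigr)^{1/p}=o(\delta^{n/p})$, so $\bigl(\int_S\lambda^p(C(\xi,\delta))\,d\sigma(\xi)\bigr)^{1/p}=o(\delta^{n-\beta+n/p})$. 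Since $\gamma:=n-\beta+n/p<2n$ in the allowed range of $\beta$, Theorem~\ref{t:smallo} yields $m_p(r,G_\mu)=o((1-r)^{\gamma-n})=o((1-r)^{n/p-\beta})$.

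What each approach buys: yours is self-contained and, incidentally, also covers $p=1$, which the paper's argument does not (Theorem~\ref{t:smallo} assumes $p>1$). The paper's argument, on the other hand, is four lines long once Theorem~\ref{t:smallo} and Proposition~\ref{p:smooth} are available, and its purpose is precisely to show that the smoothness characterization \eqref{e:osmooth} is strictly stronger than the moment condition of Theorem~B: the moment hypothesis $\int(1-|w|)^\beta\,d\mu<\infty$ implies a specific decay rate for $\int_S\lambda^p(C(\xi,\delta))\,d\sigma(\xi)$, and the main theorem then reads off the growth of $m_p(r,G_\mu)$. Your direct estimate never passes through the measure-smoothness quantity $\lambda(C(\xi,\delta))$ at all.
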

(1)  If $1\leq p<\frac{2n-1}{2(n-1)}$ and $-n(1-1/p)<\beta\le n$, then
\begin{equation*}%\label{e:small_p}
  \lim_{r\to 1-}(1-r^2)^{\beta -n/p}m_p(r,G_\mu)=0.
\end{equation*}

(2)  If $n\geq 2$ and $\frac{2n-1}{2(n-1)}\leq p<\frac{2n-1}{2n-3}$ and $-n(1-1/p)<\beta\le n$, then
\begin{equation*}%\label{e:big_p}
\liminf_{r\to 1-}(1-r^2)^{\beta-n/p}m_p(r,G_\mu)=0.
\end{equation*}

\begin{rem}
 Theorem \ref{t:b} implies Theorem \ref{t:a} for $\beta=n$.
\end{rem}

\begin{rem} \label{r:p=1} It follows from  results    of \cite{ulrich} (see also \cite{stoll}) that $$m_1(r, G_\mu)=o(1) \quad r\to 1-.$$ So we omit the case $p=1$.
\end{rem}

\begin{rem} \label{r:p_large} It is shown in  \cite[Example 2]{stoll2}  that for each $n\ge 2$ there exists a discrete measure $\mu$ satisfying \eqref{e:conver} such that $$\limsup_{r\to 1-}(1-r^2)^{n(1-1/p)}m_p(r,G_\mu)=\infty$$  for all $p\ge \frac
{2n-1}{2(n-1)}$.
\end{rem}

In view of Remarks \ref{r:p=1} and \ref{r:p_large}, we confine to the case $1<p< \frac
{2n-1}{2(n-1)}$.

Theorem \ref{t:a} gives the maximal growth rate of the $p$th mean of the Green potentials, but does not take into account particular properties of a measure $\mu$. It is appeared that smoothness properties of so called {\it complete measure} (in the sense of Grishin \cite{Gr1}, \cite{Chy08ms}, \cite{chyzh}) or the {\it related measure} (see \cite{Gard84}) of a subharmonic function allow to describe its growth.
Here we just note that in the case when  $n=1$ and $u=-G_\mu$, the complete measure $\lambda=\lambda_u$ of $u$ is the weighted Riesz measure $d\lambda (z)=(1-|z|)d\mu(z)$. In particular, results from \cite{chyzh} imply

%Our main result is a generalization of the theorem by Chyzhykov for subharmonic functions in the
%one-dimensional case.

\begin{atheorem}[\cite{chyzh}] Let $\gamma\in(0,1],~p\in(1,\infty)$, $n=1$,  and $\mu$ be a Borel measure satisfying \eqref{e:conver}. Let $\lambda$ be defined as above. Necessary and sufficient that
$$m_p(r,G_\mu)=O\left((1-r)^{\gamma-1}\right),~r\to  1-,$$

hold is that
$$\int\limits_0^{2\pi}\lambda^p(\{\rho e^{i\theta}\in{{B}}:\rho \geq 1-\delta,~|\theta-\varphi|\leq\pi\delta\})d\varphi=O(\delta^{p\gamma}),\quad 0<\delta<1.$$
\end{atheorem}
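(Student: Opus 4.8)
Since the statement concerns $n=1$, the plan is to make the Green kernel explicit, strip off a mild logarithmic near-diagonal term, and then prove the two implications separately --- necessity by positivity of $G$ plus a covering argument, sufficiency by a dyadic decomposition of $B$ into tops of Carleson boxes combined with Young's inequality. Write $Q(\varphi,\delta)=\{\rho e^{i\theta}\in B:\rho\ge1-\delta,\ |\theta-\varphi|\le\pi\delta\}$ for the box in the statement and $d\lambda(w)=(1-|w|)\,d\mu(w)$. For $n=1$ one has $g(z)=\log\frac1{|z|}$ and $\varphi_w(z)=\frac{w-z}{1-\bar wz}$, so $G(z,w)=\log\bigl|\frac{1-\bar wz}{z-w}\bigr|=\frac12\log(1+x)$ with $x=\frac{(1-|z|^2)(1-|w|^2)}{|z-w|^2}$; since $|1-\bar wz|^2=(1+x)|z-w|^2$ this gives $G(z,w)\asymp\frac{(1-|z|)(1-|w|)}{|1-\bar wz|^2}$ wherever $x\lesssim1$ and $G(z,w)\asymp\log\frac{1-|z|}{|z-w|}$ on the remaining near-diagonal set, where automatically $1-|w|\asymp1-|z|$. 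Thus the main part of $G_\mu(z)$ equals $(1-|z|)\int_B|1-\bar wz|^{-2}\,d\lambda(w)$, while the near-diagonal part I would bound using $\log t\le\varepsilon^{-1}t^{\varepsilon}$ with fixed $\varepsilon\in(0,1/p)$ and Minkowski's integral inequality, so that its $L^p(S)$-norm at radius $r$ is controlled by $\mu$, hence $\lambda$, of a single box of size $1-r$ --- here is where $p<\infty$ is used.

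\emph{Necessity.} Suppose $m_p(r,G_\mu)=O((1-r)^{\gamma-1})$. Fix $\delta$, set $r=1-\delta$ and $z=re^{i\psi}$. If $w\in Q(\varphi,\delta)$ and $|\psi-\varphi|\le\delta$, then $|z-w|\lesssim\delta$, so $G(z,w)\gtrsim\frac{1-|w|}{\delta}$ and, since $G\ge0$, $G_\mu(z)\ge\int_{Q(\varphi,\delta)}G(z,w)\,d\mu(w)\gtrsim\frac1\delta\lambda(Q(\varphi,\delta))$. Taking $p$th powers and integrating in $\psi$ over an arc of length $\asymp\delta$ about $\varphi$ gives $\int G_\mu(re^{i\psi})^p\,d\psi\gtrsim\delta^{1-p}\lambda(Q(\varphi,\delta))^p$; summing over a partition of the circle into such arcs and using the growth hypothesis yields $\sum_i\lambda(Q(\varphi_i,\delta))^p\lesssim\delta^{p\gamma-1}$. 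Enlarging the boxes by a fixed factor (re-running the same argument at box size $C\delta$ over a bounded number of shifted partitions) converts this into $\int_0^{2\pi}\lambda(Q(\varphi,\delta))^p\,d\varphi\lesssim\delta^{p\gamma}$.

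\emph{Sufficiency.} Decompose $B$ into the tops $R_{j,k}=\{w:1-|w|\in(2^{-j-1},2^{-j}],\ \arg w\in I_{j,k}\}$ of the dyadic Carleson boxes $Q_{j,k}$, $I_{j,k}$ an arc of length $\asymp2^{-j}$ centred at $\varphi_{j,k}$, so $\lambda(R_{j,k})\le\lambda(Q_{j,k})$. Fix $r$, put $\delta=1-r$, $J\asymp\log_2\frac1\delta$. On $R_{j,k}$ one has $|re^{i\psi}-w|\asymp\max(2^{-j},|\psi-\varphi_{j,k}|)$ for $j\le J$ and $\asymp\max(\delta,|\psi-\varphi_{j,k}|)$ for $j>J$, the diagonal band $2^{-j}\asymp\delta$ being absorbed by the logarithmic estimate above; this reduces $G_\mu(re^{i\psi})$ to $\frac1\delta\lambda(Q(\psi,C\delta))$ plus convolution-type sums $\delta\sum_k\lambda(R_{j,k})\,\kappa_j(\psi-\varphi_{j,k})$ with $\kappa_j(\theta)=\max(2^{-j},|\theta|)^{-2}$. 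The hypothesis applied at scale $2^{-j}$, averaged over the $\varphi_{j,k}$, gives the key estimate $\sum_k\lambda(Q_{j,k})^p\lesssim2^{j(1-p\gamma)}$. Passing to $L^p(d\psi)$, I would use Young's inequality with the step function equal to $2^j\lambda(R_{j,k})$ on $I_{j,k}$ (of $L^p$-norm $\lesssim2^{j(1-\gamma)}$, by the key estimate) as the $L^p$ factor and $\kappa_j$ (of $L^1$-norm $\asymp2^j$) as the $L^1$ factor, giving the level-$j$ contribution $\lesssim\delta\,2^{j(2-\gamma)}$; as $2-\gamma>0$ the geometric series over $j\le J$ sums to $\lesssim\delta\,2^{J(2-\gamma)}\asymp\delta^{\gamma-1}$. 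The analogous --- and, since the relevant geometric series is dominated by its last term $2^J\asymp\delta^{-1}$, in fact crudely summable --- estimate for $j>J$, together with $\|\delta^{-1}\lambda(Q(\cdot,C\delta))\|_{L^p(S)}\lesssim\delta^{-1}\delta^{\gamma}=\delta^{\gamma-1}$ straight from the hypothesis, yields $m_p(r,G_\mu)\lesssim\delta^{\gamma-1}$.

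\emph{Main obstacle.} The hard part is the sufficiency direction, precisely the large-scale sum over $j\le J$: a naive Minkowski bound there produces only $\delta^{\gamma+1/p-2}$ in place of $\delta^{\gamma-1}$, which is useless for $p>1$, so one is forced to Young's inequality with the \emph{correct} split of integrability --- weight in $L^p$, kernel in $L^1$ --- which exploits the $\ell^p$ (not merely $\ell^1$) regularity of $\lambda$ encoded in the hypothesis and is exactly what makes the dyadic series telescope to the precise power $(1-r)^{\gamma-1}$. The second delicate point is the bookkeeping at the diagonal scale $1-|w|\asymp1-|z|$, where the logarithmic singularity of the Green function appears and must be shown to contribute no more than $(1-r)^{\gamma-1}$.
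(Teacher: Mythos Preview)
This statement is quoted from \cite{chyzh} and not proved in the present paper, but the paper does prove its $n>1$ analogue (Theorem~\ref{t:main}), and that argument specializes to $n=1$; so that is the natural comparison. Your necessity argument is correct and essentially the paper's. Your dyadic/Young treatment of the off-diagonal scales $j\ne J$ is also correct and is a genuinely different route from the paper's estimate of $u_2$, which uses a dyadic decomposition in the sets $E_k(z)$ together with H\"older in the summation index rather than a convolution inequality.

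The gap is the near-diagonal logarithmic piece. After $\log t\le\varepsilon^{-1}t^{\varepsilon}$ with $\varepsilon p<1$, Minkowski's integral inequality gives
\[
\bigl\|u_1^{\log}(re^{i\cdot})\bigr\|_{L^p}
\ \lesssim\ \delta^{\varepsilon}\!\int_{1-|w|\asymp\delta}\bigl\|\,|re^{i\cdot}-w|^{-\varepsilon}\chi_{\{|re^{i\cdot}-w|<c\delta\}}\bigr\|_{L^p}\,d\mu(w)
\ \lesssim\ \delta^{1/p}\,\mu\bigl(\{1-|w|\asymp\delta\}\bigr),
\]
and the hypothesis (via H\"older on a $\delta$-partition of the annulus) bounds the right side only by $\delta^{\gamma-2+1/p}$, strictly worse than $\delta^{\gamma-1}$ for every $p>1$. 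This is exactly the $\ell^1$-versus-$\ell^p$ defect you yourself diagnose for the large-scale sum: Minkowski sees only the total mass of $\mu$ on the annulus, not the $\ell^p$ box-regularity. Your Young device does not rescue the diagonal box either, since within $R_{J,k(\psi)}$ the kernel $|z-w|^{-\varepsilon}$ is singular and $\int_{R_{J,k(\psi)}}|z-w|^{-\varepsilon}\,d\mu$ is not controlled by $\mu(R_{J,k(\psi)})$ alone. The paper's cure for $u_1$ is a different mechanism: apply H\"older to $(\int g(\varphi_w(z))\,d\mu)^p$ so as to pull out a factor $\mu^{p-1}(K(z))$, swap the order of integration, integrate the singular kernel over the \emph{circle} first (for $n=1$ one gets $\int_{\{\psi:\,|\varphi_w(re^{i\psi})|<1/2\}}|\varphi_w(re^{i\psi})|^{-\varepsilon p}\,d\psi\le C\delta$ since that arc has length $\asymp\delta$), and then invoke Lemma~\ref{l:cn} to convert $\int\mu^{p-1}(\text{box})\,d\mu$ into $\delta^{-1}\int_S\mu^{p}(\text{box})\,d\sigma$. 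That chain produces the correct bound $\int_S|u_1|^p\lesssim\delta^{p(\gamma-1)}$.
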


Define for $a,b\in\bar{B}$ the \emph{anisotropic metric} on $S$ by $d(a,b)=|1-\langle a,b\rangle|^{1/2}$ (\cite[Chap.5.1]{rud}).

For $\xi\in S$ and $\delta >0$ we denote  $$C(\xi,\delta)=\{z\in B:d(z,\xi)<\delta^{1/2}\}, \quad  D(\xi,\delta)=\{z\in B:d(z,\xi)<\delta\}, $$ and $d\lambda(z)=(1-|z|)^nd\mu(z).$

The following theorem is our main result. As we can see later, it generalizes Theorem \ref{t:a}(1) and Theorem \ref{t:b}(1).
\begin{thm} \label{t:main}
   Let $n>1$,  $1<p<\frac{2n-1}{2(n-1)}$, $0\leq\gamma<2n$, $\mu$ be a  Borel measure satisfying  (\ref{e:conver}). Then
   \begin{equation}\label{e:growth}
m_p\left(r,G_\mu\right)=O\left((1-r)^{\gamma-n}\right),~r\uparrow 1
\end{equation}
holds if and only if
\begin{equation}\label{e:smooth}
\left(\int_S\lambda^p\left(C(\xi,\delta)\right)d\sigma(\xi)\right)^{\frac{1}{p}}=O\left(\delta^\gamma\right),~0<\delta<1.
\end{equation}
\end{thm}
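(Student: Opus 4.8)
The plan is to pass through an estimate of $G(z,w)$ in terms of the anisotropic metric, reducing the $p$th mean to an integral operator acting on $\lambda$, and then dualize. First I would record the standard two-sided bound for the invariant Green function: for $z=r\xi$ and $w\in B$ one has, up to multiplicative constants, $G(r\xi,w)\asymp \min\bigl\{(1-|w|^2)^n,\ (1-r^2)^n/|1-\langle r\xi,w\rangle|^n\bigr\}$ when $|1-\langle r\xi,w\rangle|$ is bounded away from $0$, with a logarithmic blow-up only on the diagonal; this follows from the explicit formula for $g$ and the identity $1-|\varphi_w(z)|^2=\frac{(1-|z|^2)(1-|w|^2)}{|1-\langle z,w\rangle|^2}$. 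Using $d\lambda(w)=(1-|w|^2)^n\,d\mu(w)$ (up to the harmless switch between $1-|w|$ and $1-|w|^2$), this turns $G_\mu(r\xi)$ into something comparable to $\int_B \frac{(1-r^2)^n}{\max\{(1-r^2)^n,\,|1-\langle r\xi,w\rangle|^n\}}\,\frac{d\lambda(w)}{(1-|w|^2)^n}$, and after splitting the ball into the ``tent'' regions $|1-\langle r\xi,w\rangle|\le 1-r^2$ and the dyadic annuli where $|1-\langle r\xi,w\rangle|\asymp 2^k(1-r^2)$, the kernel is controlled by a sum of averages of $\lambda$ over Carleson boxes $C(\eta,\delta)$ with $\delta\asymp 2^k(1-r^2)$, centered at points $\eta\in S$ near $\xi$.

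The core of the argument is then the equivalence of the growth of $m_p(r,G_\mu)=\|G_\mu(r\cdot)\|_{L^p(S)}$ with the Carleson-box condition \eqref{e:smooth}. For the direction \eqref{e:smooth}$\Rightarrow$\eqref{e:growth} I would estimate $m_p(r,G_\mu)$ by Minkowski's integral inequality applied to the dyadic decomposition above: each dyadic piece contributes $\asymp 2^{-kn}(1-r^2)^{-n}\cdot\Delta$-average where the $L^p(S)$-norm of $\xi\mapsto \lambda(C(\xi,2^k(1-r^2)))$ is $O((2^k(1-r^2))^\gamma)$ by hypothesis; summing the geometric-type series in $k$ (which converges precisely because $\gamma<2n$ controls the $k\to\infty$ tail while the near-diagonal $k\le 0$ terms are handled by the restriction $p<\frac{2n-1}{2(n-1)}$, exactly the Stoll range that makes the singular kernel $L^p$-bounded against the sphere) yields $m_p(r,G_\mu)=O((1-r^2)^{\gamma-n})$, which is \eqref{e:growth}. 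The converse direction \eqref{e:growth}$\Rightarrow$\eqref{e:smooth} is obtained by testing: fix $\xi_0\in S$ and $\delta$, take $r=1-\delta$, and observe that for $w\in C(\xi_0,\delta)$ the kernel $G(r\xi,w)$ is $\gtrsim (1-r^2)^n$ for all $\xi$ in a metric ball $D(\xi_0,c\delta)$ on $S$, which has $\sigma$-measure $\asymp \delta^{2n-1}$; hence $(1-r^2)^{-n}G_\mu(r\xi)\gtrsim \mu(C(\xi_0,\delta))\cdot(\text{something})$, but more efficiently one integrates: $\int_{D(\xi_0,c\delta)} G_\mu(r\xi)^p\,d\sigma(\xi)\gtrsim \delta^{np}\lambda(C(\xi_0,\delta))^p$ divided by the appropriate power of $\delta$ coming from $d\lambda$ vs $d\mu$, and then one integrates this over $\xi_0\in S$ and uses a covering/Fubini argument to recover $\int_S\lambda^p(C(\xi,\delta))\,d\sigma(\xi)$ on the left, bounded by $\delta^{p\gamma}$ via \eqref{e:growth}.

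I expect the main obstacle to be the bookkeeping in the near-diagonal region and the verification that the full kernel operator
$$
T_rh(\xi)=\int_B \frac{(1-r^2)^n}{\max\{(1-r^2)^n,\,|1-\langle r\xi,w\rangle|^n\}}\,h(w)\,d\mu(w)
$$
maps the measure-theoretic data to $L^p(S)$ with the right norm, uniformly in $r$ — this is where the hypothesis $1<p<\frac{2n-1}{2(n-1)}$ is essential and where the argument most resembles (and must extend) Stoll's proof of Theorem~\ref{t:a}(1). Concretely, one needs that $\xi\mapsto \int_S |1-\langle \xi,\eta\rangle|^{-n}\,d\sigma(\eta)$-type singular integrals are $L^p$-bounded in exactly this range, which is the anisotropic analogue of a fractional integration estimate on $S$ (see \cite[Chap.~5]{rud}); the endpoint failure at $p=\frac{2n-1}{2(n-1)}$ is precisely what is reflected in Remark~\ref{r:p_large}. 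A secondary technical point is the passage between the metric balls $D(\xi,\delta)$ and $C(\xi,\delta)$ and the comparability $\sigma(D(\xi,\delta))\asymp\delta^{2n-1}$, together with a Whitney-type covering of $C(\xi,\delta)$ by Carleson boxes of comparable size, needed to make the Fubini step in the converse direction rigorous; these are standard but must be stated carefully. Finally, to conclude that the theorem generalizes Theorem~\ref{t:a}(1) and Theorem~\ref{t:b}(1) one checks that \eqref{e:conver}, resp.\ $\int_B(1-|w|^2)^\beta d\mu<\infty$, forces the trivial bound $\lambda(C(\xi,\delta))=o(\delta^{2n})$, resp.\ $O(\delta^{2n-2(n-\beta)\vee 0}\cdots)$, after integration over $S$ — i.e.\ the case $\gamma=n$, resp.\ $\gamma=\beta$ in the notation matching — which I would carry out in a short corollary-style remark after the proof.
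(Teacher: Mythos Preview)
Your outline is close to the paper's for the off-diagonal part and for the necessity, but there is a genuine gap in the near-diagonal estimate. First, two small slips: for $n>1$ the function $g$ blows up like $|z|^{-2n+2}$, not logarithmically, and the off-diagonal Green estimate has $|1-\langle z,w\rangle|^{2n}$ in the denominator (together with $(1-|w|^2)^n$ in the numerator), not $|1-\langle z,w\rangle|^{n}$. With the correct exponent the dyadic tail for $k\ge 0$ sums precisely when $\gamma<2n$, so that part of your Minkowski argument matches the paper's treatment of $u_2$.

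The real problem is your handling of the near-diagonal piece $u_1(z)=\int_{|\varphi_w(z)|<1/4}G(z,w)\,d\mu(w)$. A pure Minkowski estimate there gives, via Stoll's bound $\|g(\varphi_{r\cdot}(w))\|_{L^p(S)}\le c(1-|w|^2)^n(1-r)^{-n(1-1/p)}$, only
\[
\|u_1(r\cdot)\|_p \;\le\; c\,(1-r)^{-n(1-1/p)}\,\lambda\bigl(\{\,||w|-r|<c(1-r)\}\bigr),
\]
and from \eqref{e:smooth} together with H\"older and Fubini one can extract at best $\lambda(\{||w|-r|<c(1-r)\})=O((1-r)^{\gamma-n})$. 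The product is $O((1-r)^{\gamma-2n+n/p})$, which for $p>1$ is strictly worse than the target $(1-r)^{\gamma-n}$; Minkowski loses exactly a factor $(1-r)^{n/p-n}$. The paper avoids this loss by applying H\"older \emph{before} the $\xi$-integration, writing
\[
|u_1(r\xi)|^p \le \int_{B^*(r\xi,1/4)}|\varphi_w(r\xi)|^{-p(2n-2)}\,d\mu(w)\cdot \mu^{p-1}\bigl(B^*(r\xi,\tfrac14)\bigr),
\]
then swapping the order of integration and using Stoll's $L^p$-bound on the inner integral. What remains is an expression of the form $\int_S \nu^{p-1}(D(\eta,\delta))\,d\nu(\eta)$ with $\delta\asymp(1-r)^{1/2}$, and the paper proves a separate lemma,
\[
\int_S \nu^{p-1}(D(\eta,\delta))\,d\nu(\eta)\;\le\;\frac{N^p}{\delta^{2n}}\int_S \nu^{p}(D(\eta,\delta))\,d\sigma(\eta),
\]
to convert this back to the quantity controlled by \eqref{e:smooth}. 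This self-improving lemma is the missing idea in your sketch; without it (or an equivalent device) the near-diagonal contribution cannot be bounded by $(1-r)^{\gamma-n}$ for the full range $0\le\gamma<2n$. Your necessity argument, by contrast, is essentially the paper's: the pointwise lower bound $G_\mu(r\xi)\ge c\,(1-r)^{-n}\lambda(C(\xi,1-r))$ already gives \eqref{e:smooth} after taking $L^p(S)$-norms, with no covering needed.
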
%\textbf{Theorem}.

As a corollary we obtain a criterion of boundedness of the invariant Green potential.
\begin{cor} \label{c:bounded}
   Let $n>1$, $1<p<\frac{2n-1}{2(n-1)}$, $\mu$ be a  Borel measure satisfying  (\ref{e:conver}). Then
   \begin{equation*}\label{e:growth_b}
m_p\left(r,G_\mu\right)=O\left(1\right), \quad 0<r< 1
\end{equation*}
 if and only if
\begin{equation}\label{e:smooth_cor}
\left(\int_S\lambda^p\left(C(\xi,\delta)\right)d\sigma(\xi)\right)^{\frac{1}{p}}=O\left(\delta^n\right),~0<\delta<1.
\end{equation}
\end{cor}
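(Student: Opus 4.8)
The plan is to obtain Corollary \ref{c:bounded} directly from Theorem \ref{t:main} by specialising $\gamma = n$. Recall that the growth condition \eqref{e:growth} reads $m_p(r,G_\mu) = O((1-r)^{\gamma-n})$, and for $\gamma = n$ the right-hand side becomes $O((1-r)^0) = O(1)$, which is precisely the boundedness statement in the corollary. Simultaneously, \eqref{e:smooth} becomes $\bigl(\int_S \lambda^p(C(\xi,\delta))\,d\sigma(\xi)\bigr)^{1/p} = O(\delta^n)$, which is exactly \eqref{e:smooth_cor}. So the content of the proof is essentially: check that $\gamma = n$ is an admissible choice of parameter in Theorem \ref{t:main}, and then quote the theorem.

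First I would verify the admissibility: Theorem \ref{t:main} requires $0 \le \gamma < 2n$, and $n > 1$ gives in particular $n < 2n$, with $n > 0$, so $\gamma = n$ lies in the allowed range $[0, 2n)$. The hypotheses $n > 1$, $1 < p < \frac{2n-1}{2(n-1)}$, and that $\mu$ is a Borel measure satisfying \eqref{e:conver} are identical in the corollary and in the theorem, so no further checking is needed there. One small point to address is the difference between ``$r \uparrow 1$'' (a statement about the behaviour as $r \to 1-$) in \eqref{e:growth} and ``$0 < r < 1$'' (a statement uniform on the whole interval) in the corollary: since $G_\mu$ is continuous on $B$ (being the potential of a measure satisfying the convergence condition), $m_p(r,G_\mu)$ is a continuous, hence locally bounded, function of $r$ on $[0,1)$, so $m_p(r,G_\mu) = O(1)$ as $r \to 1-$ is equivalent to $m_p(r,G_\mu) = O(1)$ for all $0 < r < 1$.

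Thus the proof is short: apply Theorem \ref{t:main} with $\gamma = n$; observe that \eqref{e:growth} then reads $m_p(r,G_\mu) = O(1)$ and \eqref{e:smooth} then reads \eqref{e:smooth_cor}; invoke the local boundedness of $r \mapsto m_p(r,G_\mu)$ to upgrade the ``as $r\to 1-$'' form to the ``for all $0<r<1$'' form; and conclude. I do not expect any genuine obstacle here, since the corollary is a pure specialisation. The only thing one must be slightly careful about is not to overlook the boundary of the parameter range — but as noted, $\gamma = n$ is strictly inside $[0,2n)$ precisely because $n \ge 1$, so this is fine. Should one wish to make the corollary self-contained without appealing to continuity of $G_\mu$, one could alternatively note that the implication used in Theorem \ref{t:main} already yields the uniform-in-$r$ bound, since the constants produced in the proof of that theorem do not degenerate as $r \to 1-$.
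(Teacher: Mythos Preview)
Your proposal is correct and matches the paper's approach exactly: the paper states Corollary~\ref{c:bounded} as an immediate consequence of Theorem~\ref{t:main} (the case $\gamma=n$) and gives no separate proof. One small correction: $G_\mu$ need not be continuous on $B$ (it is only lower semicontinuous and may be $+\infty$ on the support of $\mu$), so your alternative justification---that the constants in the proof of Theorem~\ref{t:main} are uniform for $\tfrac12<r<1$, with the range $0<r\le\tfrac12$ being trivially handled---is the sound way to pass from ``$r\to 1-$'' to ``$0<r<1$''.
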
%\textbf{Theorem}.

%\begin{rem}
% In the case when $n=1$, $p=2$, and $G_ \mu=\log|B(z)|$, $B$ being a Blaschke product, G.\ MacLane and L.\ Rubel (\cite{MR})
% found a criterion of boundedness of the mean using  Fourier's method.
% \end{rem}

\begin{rem}
 For $\gamma\in(n,2n)$, Theorem \ref{t:main} gives necessary and sufficient conditions for decrease of the Green potential.
\end{rem}

\begin{ex}
 If $\mu$ is the Lebesgue measure on $B$, then
  \begin{equation}\label{e:lebeg_smoth}
   \lambda^p\left(C(\xi,\delta)\right)=O(\delta^{n+1})
\end{equation}
i.e. the assumption \eqref{e:smooth} holds with $\gamma=n+1$, thus $m_p\left(r,G_\mu\right)=O(1-r)$ as $r\to1-$ and $n>1$.
The latter relation is valid in the case $n=1$ as well, which can be checked directly.

    The estimate \eqref{e:lebeg_smoth} follows from the next remarks. Firstly, the radial projection of $C(\xi,\delta)$ on $S$ has $(2n-1)$-dimensional measure $\sigma_\delta=c\delta^n$ (\cite[Prop. 5.1.4]{rud}). Secondly,  by the definition, $C(\xi,\delta)\subset \{z\in B: |z|\ge 1-\delta\}$.
\end{ex}

It is sometimes suitable to have an ``o''-analog of Theorem \ref{t:main}.

\begin{thm} \label{t:smallo}
   Let $n>1$, $1<p<\frac{2n-1}{2(n-1)}$, $0\leq\gamma< 2 n$, $\mu$ be a  Borel measure satisfying  (\ref{e:conver}). Then
   \begin{equation}\label{e:ogrowth}
m_p\left(r,G_\mu\right)=o\left((1-r)^{\gamma-n}\right),~r\to  1-
\end{equation}
holds if and only if
\begin{equation}\label{e:osmooth}
\left(\int_S\lambda^p\left(C(\xi,\delta)\right)d\sigma(\xi)\right)^{\frac{1}{p}}=o\left(\delta^\gamma\right),~\delta\to0+.
\end{equation}
\end{thm}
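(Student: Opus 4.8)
The plan is to deduce Theorem \ref{t:smallo} from Theorem \ref{t:main} by a standard ``decomposition of the measure'' argument, exploiting the linearity of the potential in $\mu$. The key observation is that the ``little-$o$'' statement on one side is equivalent to an ``$O$''-statement plus an approximation property, and both sides of the equivalence transform compatibly under splitting $\mu$ into a part concentrated near the boundary and a part bounded away from it.

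First I would prove the easy direction, that \eqref{e:ogrowth} implies \eqref{e:osmooth}. Here one simply re-examines the proof of the corresponding implication in Theorem \ref{t:main}: the estimate producing \eqref{e:smooth} from \eqref{e:growth} is obtained by integrating a lower bound for $G(z,w)$ over a suitable region, and is uniform enough that if the hypothesis on $m_p(r,G_\mu)$ is $o((1-r)^{\gamma-n})$ rather than $O$, the conclusion automatically improves to $o(\delta^\gamma)$. Concretely, one fixes $\varepsilon>0$, chooses $r_0$ so that $m_p(r,G_\mu)\le\varepsilon(1-r)^{\gamma-n}$ for $r>r_0$, and checks that the contribution of $\lambda$ restricted to $\{|w|\le r_0\}$ to the left side of \eqref{e:osmooth} is $O(\delta^{2n})=o(\delta^\gamma)$ (since $\gamma<2n$), while the remaining part is controlled by $\varepsilon\delta^\gamma$ up to a constant.

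For the converse, assuming \eqref{e:osmooth}, I would split $\mu=\mu_1+\mu_2$ where $\mu_1=\mu|_{\{|w|\le 1-t\}}$ and $\mu_2=\mu|_{\{|w|>1-t\}}$ for a small parameter $t$ to be chosen. The potential $G_{\mu_1}$ is bounded on each ball $\{|z|\le r\}$ and, more usefully, one shows $m_p(r,G_{\mu_1})=o((1-r)^{\gamma-n})$ directly using that $\lambda(B)<\infty$ and the behavior of the Green kernel when $w$ stays in a fixed compact set while $|z|\to 1$; alternatively, since $\lambda(\{|w|\le 1-t\})$ is finite, $G_{\mu_1}$ grows no faster than $o((1-r)^{-n})$, and with a more careful estimate one gets the $\gamma$-rate. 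For $\mu_2$, the hypothesis \eqref{e:osmooth} gives that for any $\varepsilon>0$ there is $t>0$ with $\bigl(\int_S\lambda^p(C(\xi,\delta))\,d\sigma(\xi)\bigr)^{1/p}\le\varepsilon\delta^\gamma$ for all small $\delta$, and in fact (since the mass of $\lambda_2$ near scale $\delta$ is supported on $\{|w|>1-t\}$) one can arrange the constant in the $O$-version of \eqref{e:smooth} for $\mu_2$ to be as small as $C\varepsilon$; applying Theorem \ref{t:main} to $\mu_2$ yields $m_p(r,G_{\mu_2})\le C\varepsilon(1-r)^{\gamma-n}$. Adding the two estimates and using Minkowski's inequality gives $\limsup_{r\to1-}(1-r)^{n-\gamma}m_p(r,G_\mu)\le C\varepsilon$, and letting $\varepsilon\to0$ completes the proof.

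The main obstacle is making precise the claim that one can force the implied constant in Theorem \ref{t:main} applied to $\mu_2$ to be small. Theorem \ref{t:main} as stated is only a qualitative ``$O$ iff $O$'' equivalence, so I would need to extract from its proof a \emph{quantitative} version: there is an absolute constant $C=C(n,p,\gamma)$ such that if the right-hand side of \eqref{e:smooth} holds with implied constant $A$, then \eqref{e:growth} holds with implied constant $CA$ (and conversely). Tracking the constants through the proof of Theorem \ref{t:main} is routine but must be done carefully, in particular checking that the finitely-many-compact-part error terms really are negligible at the $\gamma$-scale because $\gamma<2n$. Once this quantitative reformulation is in hand, the splitting argument above closes the loop; the handling of $G_{\mu_1}$ is comparatively soft and can be done either by the same quantitative theorem (its smoothness integral being $O(\delta^{2n})=o(\delta^\gamma)$) or by a direct kernel estimate.
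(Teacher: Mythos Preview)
Your approach is correct but takes a different route from the paper's. The paper reopens the proof of Theorem~\ref{t:main} rather than using it as a black box. Necessity is, as you say, immediate from the pointwise lower bound $G_\mu(r\xi)\ge c(1-r)^{-n}\lambda(C(\xi,1-r))$ established there (your ``concretely'' elaboration is unnecessary: no splitting of $\lambda$ is needed, since that inequality passes the $o$ straight through). For sufficiency the $u_1$ estimate is unchanged, and the new idea lies in $u_2$: instead of decomposing the measure, the paper splits the sum $\sum_k 2^{-\alpha npk}\int_S\tilde\lambda^p(E_k(r\xi))\,d\sigma(\xi)$ at $k\approx\tfrac12\log_2\tfrac1{1-r}$. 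For the small-$k$ block one has $E_k\subset\{1-|w|\le 2\sqrt{1-r}\}$, so the $o$-hypothesis \eqref{e:osmooth} applies termwise; the large-$k$ block is a geometric tail carrying an extra factor $(1-r)^{\frac p2(\alpha n-\gamma)}\to0$.

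Your decomposition $\mu=\mu_1+\mu_2$, combined with a quantitative form of Theorem~\ref{t:main}, is a legitimate alternative. Its advantage is modularity: once the constants are tracked, the same device upgrades any $O$-equivalence of this shape to an $o$-equivalence without touching the internal estimates. Two small points to tighten: first, for $\mu_1$ you cannot literally invoke Theorem~\ref{t:main} at exponent $2n$ (the hypothesis is $\gamma<2n$), but any fixed $\gamma'\in(\gamma,2n)$ works and still yields $m_p(r,G_{\mu_1})=O((1-r)^{\gamma'-n})=o((1-r)^{\gamma-n})$; second, the constant-tracking in the $u_2$ part must cover scales $2^{k+1}(1-r)$ up to order~$1$, where the smoothness hypothesis for $\mu_2$ has to be supplemented by the trivial bound $\lambda_2(B)=\lambda(\{|w|>1-t\})\to0$ as $t\to0$. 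Both are routine, as you anticipate.
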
%\textbf{Theorem}.
%The following theorem generalizes Theorem \ref{t:b}(1)

The following elementary proposition is useful.
\begin{prop} \label{p:smooth}
  Let $n\in \mathbb{N}$, $\nu$ be a finite  Borel measure on $B$. Then
\begin{equation}\label{e:smoothness}
\left(\int_S\nu^p(C(\xi,\delta))d\sigma(\xi)\right)^{\frac{1}{p}}=o(\delta^{\frac{n}{p}}), \quad \delta\to 0+.
\end{equation}
\end{prop}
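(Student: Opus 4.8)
The plan is to exploit the finiteness of $\nu$ together with a covering argument on the sphere. The key geometric fact is that the sets $C(\xi,\delta)$, for fixed $\delta$ and varying $\xi\in S$, are the Korányi (anisotropic) balls of radius $\delta^{1/2}$ in the metric $d$; the sphere $S$ equipped with $d$ is a space of homogeneous type with $\sigma$-measure of a $d$-ball of radius $t$ comparable to $t^{2n}$, hence $\sigma$ of a $d(\cdot,\cdot)<\delta^{1/2}$-ball is $\asymp \delta^{n}$ (this is \cite[Prop.~5.1.4]{rud}, already invoked in the Lebesgue-measure example). I would first reduce to showing that $\int_S \nu^p(C(\xi,\delta))\,d\sigma(\xi)=o(\delta^{n})$ as $\delta\to0+$.

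First I would prove the weaker $O(\delta^n)$ bound. Write $\int_S \nu^p(C(\xi,\delta))\,d\sigma(\xi)\le \nu(B)^{p-1}\int_S \nu(C(\xi,\delta))\,d\sigma(\xi)$ since $p>1$ and $\nu(C(\xi,\delta))\le\nu(B)<\infty$. For the remaining linear integral, apply Fubini: $\int_S \nu(C(\xi,\delta))\,d\sigma(\xi)=\int_B \sigma(\{\xi\in S: d(z,\xi)<\delta^{1/2}\})\,d\nu(z)$. The inner $\sigma$-measure is $\lesssim \delta^{n}$ uniformly in $z\in B$ (again by the homogeneous-space estimate for $d$-balls, valid for $z\in\bar B$), so the whole thing is $\lesssim \delta^n\,\nu(B)$, giving the $O(\delta^{n/p})$ version of \eqref{e:smoothness}.

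To upgrade $O$ to $o$, I would use a standard truncation. Fix $\varepsilon>0$. Since $\nu$ is finite, choose a compact set $K\subset B$ with $\nu(B\setminus K)<\varepsilon$, and split $\nu=\nu_K+\nu'$ with $\nu_K=\nu|_K$, $\nu'=\nu|_{B\setminus K}$. For the small part, the $O$-estimate just proved applied to $\nu'$ gives $\big(\int_S (\nu')^p(C(\xi,\delta))\,d\sigma\big)^{1/p}\lesssim \delta^{n/p}\,\nu'(B)^{1/p}\le \delta^{n/p}\varepsilon^{1/p}$. For the compact part, note $K$ lies in some ball $\{|z|\le \rho\}$ with $\rho<1$; I claim that for $\delta$ small enough (depending on $\rho$), $C(\xi,\delta)\cap K=\emptyset$ for every $\xi\in S$. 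Indeed, from the definition $d(z,\xi)=|1-\langle z,\xi\rangle|^{1/2}$ one has $|1-\langle z,\xi\rangle|\ge 1-|z|\ge 1-\rho$ for all $\xi\in S$, so $d(z,\xi)\ge(1-\rho)^{1/2}$, and once $\delta^{1/2}<(1-\rho)^{1/2}$, i.e. $\delta<1-\rho$, we get $\nu_K(C(\xi,\delta))=0$. Hence for $\delta<1-\rho$ the compact part contributes nothing, and $\big(\int_S \nu^p(C(\xi,\delta))\,d\sigma\big)^{1/p}\le \big(\int_S (\nu')^p(C(\xi,\delta))\,d\sigma\big)^{1/p}\lesssim \varepsilon^{1/p}\delta^{n/p}$. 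Since $\varepsilon$ is arbitrary, $\delta^{-n/p}\big(\int_S \nu^p(C(\xi,\delta))\,d\sigma\big)^{1/p}\to0$, which is \eqref{e:smoothness}.

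I do not expect a serious obstacle here; the only point requiring a little care is the uniform (in $z\in\bar B$, not just $z\in S$) estimate $\sigma(\{\xi: d(z,\xi)<t\})\lesssim t^{2n}$, but this follows from the slice/projection estimate of \cite[Prop.~5.1.4]{rud} applied to the radial projection of $z$, together with monotonicity of $\sigma$ of such sets in $|z|$. The truncation step is the heart of the ``$o$'' improvement and uses nothing beyond finiteness of $\nu$ and the elementary fact that anisotropic balls around boundary points avoid any fixed compact subset of $B$ once their radius is small.
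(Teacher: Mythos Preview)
Your proof is correct and follows essentially the same route as the paper: bound $\nu^p\le\nu(B)^{p-1}\nu$, apply Fubini, and use the volume estimate $\sigma(\{\xi:d(\cdot,\xi)<\delta^{1/2}\})\asymp\delta^n$ together with the fact that $C(\xi,\delta)$ lies in the annulus $\{1-\delta\le|z|<1\}$. The paper is slightly more direct---it inserts the annulus restriction immediately in the Fubini step, so the $\nu$-integral is over $\{1-\delta\le|z|<1\}$ and is $o(1)$ by finiteness of $\nu$, which bypasses your separate $\varepsilon$-truncation into a compact part and a small tail.
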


\begin{rem}
 Theorem \ref{t:smallo} and Proposition \ref{p:smooth} imply Theorem A(1) as a corollary for $p>1$.
In Section  \ref{s:proofs} we show that Theorem \ref{t:smallo} and Proposition \ref{p:smooth} imply Theorem B(1) as well.
\end{rem}

In the sequel, the symbol $c$ stands for positive constants which depend on the parameters  indicated in the parentheses,
$a\asymp b$ means that there are positive constants $c'$ and $c''$ such that $c' a< b< c'' a$ holds.

%, not necessarily the same at each occurrence.

% The following statement follows from Theorem \ref{t:smallo} and Remarks \ref{r:smooth}  and \ref{r:p=1}.
%\begin{cor}
%   Let $G_\mu$ be the Green potential on $B$.   If $1\leq p<\frac{2n-1}{2(n-1)}$, then
%\eqref{e:small_p} holds.
%\end{cor}
\section{Auxiliary results}

The following lemma gives some basic properties of $g$ which will be needed later.

\begin{alem}[\cite{stoll}] %\textbf{Lemma A}.
 Let $0<\delta<\frac{1}{2}$ be fixed. Then $g$ satisfies the following relations:
$$g(z)\geq\frac{n+1}{4n^2}(1-|z|^2)^n,~z\in B,$$
\begin{equation}\label{e:lem_a2}
g(z)\leq c(\delta)(1-|z|^2)^n,~z\in B,|z|\geq\delta,
\end{equation}
where $c(\delta)$ is a positive constant. Furthermore,  if $n>1$ then
\begin{equation}\label{e:g_asymp}
g(z)\asymp|z|^{-2n+2},~ \quad
|z|\leq\delta.
\end{equation}
\end{alem}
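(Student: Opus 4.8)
Throughout write $r=|z|\in[0,1)$ and set $I(r)=\int_r^1(1-t^2)^{n-1}t^{-2n+1}\,dt$, so that $g(z)=\frac{n+1}{2n}I(r)$ and all three assertions reduce to one-variable estimates for $I(r)$. The plan rests on a single elementary computation: the substitution $u=1-t^2$ gives
$$\int_r^1 t(1-t^2)^{n-1}\,dt=\frac{1}{2}\int_0^{1-r^2}u^{n-1}\,du=\frac{(1-r^2)^n}{2n}.$$
I would deduce the lower and upper bounds of the lemma by comparing the weight $t^{-2n+1}$ with $t$ inside this integral, and treat the near-origin asymptotic by a separate localization argument.

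For the lower bound $g(z)\ge\frac{n+1}{4n^2}(1-|z|^2)^n$: on $(0,1]$ one has $t^{-2n+1}\ge t$ (equivalently $t^{-2n}\ge 1$), so $I(r)\ge\int_r^1 t(1-t^2)^{n-1}\,dt=\frac{(1-r^2)^n}{2n}$, and multiplying by $\frac{n+1}{2n}$ gives the claim with the stated constant. For the upper bound \eqref{e:lem_a2}, valid for $|z|\ge\delta$: here $t\ge r\ge\delta$ forces $t^{-2n}\le\delta^{-2n}$, hence $t^{-2n+1}=t^{-2n}t\le\delta^{-2n}t$, and the same identity yields $I(r)\le\delta^{-2n}\frac{(1-r^2)^n}{2n}$, i.e. \eqref{e:lem_a2} with $c(\delta)=\frac{n+1}{4n^2}\delta^{-2n}$. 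Thus the first two estimates drop out of one identity and two pointwise comparisons.

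For the asymptotic \eqref{e:g_asymp} with $n>1$ and $r\le\delta<\frac12$, the new feature is that the antiderivative $\int t^{-2n+1}\,dt=\frac{t^{2-2n}}{2-2n}$ now blows up at the origin, so $I(r)$ is governed by $r^{2-2n}$. For the upper estimate I would use $(1-t^2)^{n-1}\le 1$ and integrate directly: $I(r)\le\int_r^1 t^{-2n+1}\,dt=\frac{r^{2-2n}-1}{2(n-1)}\le\frac{r^{2-2n}}{2(n-1)}$. For the matching lower estimate I would discard all of the integral except the portion over $[r,2r]$, which is legitimate since $2r\le 2\delta<1$; on this interval $t\le 2\delta$ gives $(1-t^2)^{n-1}\ge(1-4\delta^2)^{n-1}>0$, while $t^{-2n+1}\ge(2r)^{1-2n}$, so $I(r)\ge(1-4\delta^2)^{n-1}(2r)^{1-2n}\cdot r=c(\delta)\,r^{2-2n}$. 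Together these give $I(r)\asymp r^{2-2n}$, whence $g(z)\asymp|z|^{-2n+2}$.

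I expect the only genuinely delicate point to be the lower bound in \eqref{e:g_asymp}: a naive split at a fixed interior point such as $t=\frac12$ leaves a term $r^{2-2n}-2^{2n-2}$ which can be comparable to the subtracted constant when $r$ is near $\delta\approx\frac12$, so the implicit constant in the asymptotic would degenerate. Localizing to the shrinking window $[r,2r]$ avoids this entirely and keeps the comparison constant depending only on $\delta$ (through $(1-4\delta^2)^{n-1}$), which is exactly what the statement permits since $\delta$ is fixed. The remaining computations are routine.
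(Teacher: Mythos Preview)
Your proof is correct. The paper does not actually prove this lemma; it is quoted from Stoll's monograph \cite{stoll} and stated without argument, so there is no in-paper proof to compare against. Your approach---reducing everything to the identity $\int_r^1 t(1-t^2)^{n-1}\,dt=\frac{(1-r^2)^n}{2n}$ together with the pointwise comparison $t\le t^{-2n+1}\le \delta^{-2n}t$ on $[\delta,1]$, and then handling the small-$|z|$ asymptotic by localizing to $[r,2r]$---is entirely standard and would serve as a clean self-contained substitute for the citation. The localization to $[r,2r]$ rather than to a fixed subinterval is the right move, exactly for the reason you identify: it keeps the implicit lower constant in \eqref{e:g_asymp} bounded away from zero uniformly in $r\le\delta$, with dependence on $\delta$ only through the factor $(1-4\delta^2)^{n-1}$.
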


We need the following multidimensional generalization of Lem\-ma~1 from~\cite{chyzh}.
\begin{lem} \label{l:cn}
 Let $\nu$ be a finite positive  Borel measure on $S$,  $0<\delta<\frac 12$, and $p\ge 1$. Then
$$\int_S\nu^{p-1}(D(\xi,\delta) )d\nu(\xi) \leq
\frac{N^p}{\delta^{2n}}\int_S\nu^{p}(D(\xi,\delta))d\sigma(\xi),$$
where $N$ is a positive constant independent of $p$ and $\delta$.
\end{lem}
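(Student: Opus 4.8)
The plan is to exploit that $S$, endowed with the nonisotropic metric $d$, is a space of homogeneous type, and to discretize. Recall that $d$ obeys the triangle inequality (\cite[Sec.~5.1]{rud}) and that, writing $D(\xi,t)=\{\eta\in S:|1-\langle\xi,\eta\rangle|<t^{2}\}$, one has $\sigma(D(\xi,t))\asymp t^{2n}$ for $0<t\le\sqrt2$ (\cite[Prop.~5.1.4]{rud}); thus $\sigma$ is doubling. First I would fix a maximal $\tfrac{\delta}{5}$-separated set $\{\xi_k\}_{k=1}^{M}\subset S$, finite by compactness of $S$. Then the balls $D(\xi_k,\tfrac{\delta}{10})$ are pairwise disjoint (separation), the balls $D(\xi_k,\tfrac{\delta}{5})$ cover $S$ (maximality), and a packing argument — disjoint $\tfrac{\delta}{10}$-balls inside a ball of radius $\asymp\delta$, comparing $\sigma$-measures — gives that for each $j$ and each bounded $\kappa>0$ the number of $k$ with $d(\xi_j,\xi_k)<\kappa\delta$ is bounded by a constant $K=K(\kappa,n)$ independent of $\delta$.

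Next, two routine estimates sandwich the claim. Disjointifying the cover as $S=\bigsqcup_k A_k$ with $A_k\subset D(\xi_k,\tfrac{\delta}{5})$, for $\xi\in A_k$ the triangle inequality yields $D(\xi,\delta)\subset D(\xi_k,\tfrac{6\delta}{5})$, so $\nu^{p-1}(D(\xi,\delta))\le\nu^{p-1}(D(\xi_k,\tfrac{6\delta}{5}))$ while $\nu(A_k)\le\nu(D(\xi_k,\tfrac{6\delta}{5}))$; integrating over each $A_k$ and summing gives
$$\int_S\nu^{p-1}(D(\xi,\delta))\,d\nu(\xi)\le\sum_k\nu^{p}\bigl(D(\xi_k,\tfrac{6\delta}{5})\bigr).$$
On the other side, for $\xi\in D(\xi_k,\tfrac{\delta}{5})$ one has $D(\xi_k,\tfrac{4\delta}{5})\subset D(\xi,\delta)$, hence $\nu^{p}(D(\xi,\delta))\ge\nu^{p}(D(\xi_k,\tfrac{4\delta}{5}))$ there; since the $D(\xi_k,\tfrac{\delta}{5})$ are disjoint with $\sigma$-measure $\gtrsim\delta^{2n}$,
$$\int_S\nu^{p}(D(\xi,\delta))\,d\sigma(\xi)\gtrsim\delta^{2n}\sum_k\nu^{p}\bigl(D(\xi_k,\tfrac{4\delta}{5})\bigr).$$

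The heart of the matter — and the step I expect to be the main obstacle — is then the ``de-dilation'' bound $\sum_k\nu^{p}(D(\xi_k,\tfrac{6\delta}{5}))\le K^{p}\sum_k\nu^{p}(D(\xi_k,\tfrac{4\delta}{5}))$ with $K$ \emph{independent of} $p$. This is where the uniformity of $N$ in $p$ is actually tested: $\nu$ is not doubling, so one cannot control $\nu(D(\xi_k,\tfrac{6\delta}{5}))$ by $\nu(D(\xi_k,\tfrac{4\delta}{5}))$ pointwise, and routing the passage through the Hardy--Littlewood maximal function is disallowed because its $L^{p}$ norm degenerates as $p\to1^{+}$. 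Instead I would argue discretely: since the $\tfrac{\delta}{5}$-balls of the net cover $S$, $D(\xi_k,\tfrac{6\delta}{5})\subset\bigcup\{D(\xi_m,\tfrac{4\delta}{5}):d(\xi_k,\xi_m)<\tfrac{7\delta}{5}\}$ — a union of at most $K$ balls — so $\nu(D(\xi_k,\tfrac{6\delta}{5}))\le\sum_{m:\,d(\xi_k,\xi_m)<7\delta/5}\nu(D(\xi_m,\tfrac{4\delta}{5}))$; the elementary convexity inequality $(\sum_{i=1}^{K}a_i)^{p}\le K^{p-1}\sum_{i=1}^{K}a_i^{p}$ then gives $\nu^{p}(D(\xi_k,\tfrac{6\delta}{5}))\le K^{p-1}\sum_{m:\,d(\xi_k,\xi_m)<7\delta/5}\nu^{p}(D(\xi_m,\tfrac{4\delta}{5}))$, and summing over $k$ (the relation $d(\xi_k,\xi_m)<\tfrac{7\delta}{5}$ being symmetric, and each $m$ occurring for at most $K$ values of $k$) turns the $K^{p-1}$ into $K^{p}$. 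Combining the three displays and absorbing the dimensional constants via $x^{1/p}\le\max(1,x)$ for $p\ge1$ yields the inequality with $N$ depending only on $n$; the case $p=1$ is immediate, both sides being comparable to $\nu(S)$. The only care needed elsewhere is bookkeeping: every dilated radius appearing above stays below $\sqrt2$ because $\delta<\tfrac12$, so that the estimates $\sigma(D(\cdot,t))\asymp t^{2n}$ remain valid, and every multiplicative constant is kept free of $p$.
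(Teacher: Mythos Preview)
Your argument is correct and takes a genuinely different route from the paper's. The paper first settles $p=1$ by a direct Fubini computation (the condition $d(\xi,t)<\delta$ being symmetric), then for $p>1$ applies the $p=1$ case to the auxiliary measure $d\nu_1=\nu^{p-1}(D(\cdot,\delta))\,d\nu$, uses the inclusion $D(\xi,\delta)\subset D(t,2\delta)$ for $\xi\in D(t,\delta)$ to reach $\int_S\nu^p(D(t,2\delta))\,d\sigma(t)$, and finally performs the de-dilation by covering $D(e_1,2\delta)$ with $N$ unitary images of $D(e_1,\delta)$ and invoking the $U(n)$-invariance of $\sigma$. You instead discretize via a maximal $\tfrac{\delta}{5}$-net and do everything combinatorially; the de-dilation becomes a bounded-overlap count plus the convexity bound $(\sum_1^K a_i)^p\le K^{p-1}\sum a_i^p$. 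Your approach is more portable---it uses only that $(S,d,\sigma)$ is a space of homogeneous type and never touches the group structure---while the paper's is shorter on the $p=1$ side and exploits the specific symmetry of the sphere to make the de-dilation explicit. One bookkeeping slip: in your lower bound you invoke disjointness of the balls $D(\xi_k,\tfrac{\delta}{5})$, but those are your \emph{covering} balls; the disjoint family is $D(\xi_k,\tfrac{\delta}{10})$. Use those instead (so $D(\xi_k,\tfrac{9\delta}{10})\subset D(\xi,\delta)$ for $\xi\in D(\xi_k,\tfrac{\delta}{10})$) and the rest of the chain, including the de-dilation from $\tfrac{6\delta}{5}$ down to $\tfrac{9\delta}{10}$, goes through unchanged.
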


\begin{proof}[Proof of the lemma] First, we prove the statement for $p=1$.
Since (\cite[Prop. 5.1.4]{rud}) $\sigma(D(\xi, \delta))\asymp \delta^{2n}$, one has
 \begin{gather}\label{e:volume_n}
\int_Sd\nu(\xi)\le \frac{c}{\delta^{2n}} \int_S d\nu(\xi) \int_{D(\xi, \delta)}d\sigma(t).\end{gather}
Let $\Theta\colon \Pi \to S$ be the spherical coordinates on the unit sphere, where $\Pi=[0, \pi]^{2n-2}\times [0, 2\pi)$. %, $\tilde d$ is the metric on $\Pi$ generated  by $\Theta$, i.e. $\tilde d(x,y)=d(\Theta(x), \Theta(y))$, $\{x, y\} \subset \Pi$.
 Since $\Theta$ is periodic in each variable, we consider $\Theta$  on $\mathbb{R}^{2n-1}$.
We set $\Pi '=[-\frac{\pi}{2}, \frac{3\pi}2]^{2n-2}\times [-\pi, 3\pi)$. Then, using Fubini's theorem and the periodicity of the Jacobian $\det \Theta'$, we deduce
 \begin{gather*}
 \int\limits_S d\nu(\xi) \int\limits_{D(\xi, \delta)}d\sigma(t)=\int\limits_\Pi d\nu(\Theta(x)) \int\limits_{\begin{substack} {{d(\Theta(x),\Theta(y))<\delta} \\ |x-y|<\frac\pi 2 } \end{substack}}  |\det \Theta'(y)| dy\\
 \le\int\limits_{\Pi'} dy \int\limits_{\begin{substack} {{d(\Theta(x),\Theta(y))<\delta} \\ |x-y|<\frac\pi 2 } \end{substack}} |\det \Theta'(y)| d\nu(\Theta(x)) \\ =2^{2n-1} \int\limits_\Pi dy \int\limits_{\begin{substack} {{d(\Theta(x),\Theta(y))<\delta} \\ |x-y|<\frac\pi 2 } \end{substack}} |\det \Theta'(y)| d\nu(\Theta(x)) \\
 =2^{2n-1}  \int\limits_{S} d\sigma(t)\int\limits_{D(t, \delta)}d\nu(\xi)=2^{2n-1}\int\limits_S {\nu(D(t, \delta))}d\sigma(t)
  .\end{gather*}
Substituting this estimate into \eqref{e:volume_n}, we obtain the statement of the lemma in the case $p=1$ with $N=c 2^{2n-1}$.

Let now $p>1$. We define $d\nu_1(\xi)=\nu^{p-1}(D(\xi, \delta))d\nu(\xi)$. Then
applying  the statement of the lemma for $p=1$ we get
\begin{gather} \nonumber
\int_S\nu^{p-1}(D(\xi, \delta))d\nu(\xi)
=\int_Sd\nu_1(\xi)\leq
2^{2n-1}c\int_S\frac{\nu_1(D(t, \delta))}{\delta^{2n}}d\sigma(t)\\ \nonumber
=\frac{2^{2n-1} c}{\delta^{2n}}\int_S\left(\int_{D(t, \delta)}\nu^{p-1}(D(\xi, \delta))d\nu(\xi)\right)d\sigma(t)\\
\leq\frac{2^{2n-1} c}{\delta^{2n}} \int_S{\nu^{p-1}(D(t, 2\delta))} \nonumber
\nu(D(t, \delta))d\sigma(t) \\
\leq \frac{2^{2n-1} c}{\delta^{2n}} \int_S{\nu^{p}(D(t, 2\delta))}d\sigma(t). \label{e:nu_p_est}
\end{gather}
Let $\{ t_1, \dots, t_N\}\subset S$ be a finite $\delta$-net for $D(e_1, 2\delta)$ where $e_1=(1, 0, \dots,0)$ and $N$ depends on $n$ only, i.e.
$\bigcup_{k=1}^N D(t_k, \delta)\supset D(e_1, 2\delta).$ Then $t_k$ can be represented in the form $t_k=\tau_k(e_1)$, where $\tau_k\in U(n)$, $k\in \{1, \dots, N\}$, are unitary transformations of $\mathbb{C}^n$. %We set $C_k^{(t)}
%Obviously there exist sets $C_k^{(t)}=\{\zeta\in S: d(\zeta,t_k)<\delta\}$ such that
%$$\bigcup_{k=1}^NC_k^{(t)}\supset\{\zeta\in S:d(\zeta,t)<2\delta\}$$ where $N<\infty$ is constant depending on $n$ only.
Taking into account that the measure $\sigma$ is invariant w.r.t. the elements of $U(n)$, we deduce
\begin{gather*}
\int\limits_S\nu^p(D(t, 2\delta))d\nu(t)\leq
\int\limits_S{\nu^p\left(\bigcup_{k=1}^N D(\tau_k(t), \delta)\right)}d\sigma(t)\\
\leq N^{p-1}\sum_{k=1}^{N}\int\limits_S {\nu^p\left(D(\tau_k(t), \delta)\right)}d\sigma(t)= N^{p-1}\sum_{k=1}^{N}\int\limits_S {\nu^p\left(D(t, \delta)\right)}d\sigma(t) \\
=N^p\int\limits_S {\nu^p(D(t, \delta))}d\sigma(t).
\end{gather*}
Taking into account \eqref{e:nu_p_est} we finish the proof of the lemma.
\end{proof}

\section{Proofs of the main results} \label{s:proofs}
\begin{proof}[Proof of Theorem \ref{t:main}]
{\it Sufficiency.} %\textbf{Proof}.$(\Rightarrow)$
Denote $$B^*\Bigl(z,\frac{1}{4}\Bigr)=\Bigl\{w\in B:|\varphi_w(z)|<\frac{1}{4}\Bigr\}.$$
Let us estimate the absolute values   of
$$u_1(z):=\int\limits_{B^*\left(z,\frac{1}{4}\right)}G(z,w)d\mu(w)~~\text{and}~~
u_2(z):=\int\limits_{B\setminus B^*\left(z,\frac{1}{4}\right)}G(z,w)d\mu(w).$$
We start with $u_1$. By definition
\begin{gather*}
0\leq u_1(z)=\int_{B^*\left(z,\frac{1}{4}\right)}G(z,w)d\mu(w)
=\int_{B^*\left(z,\frac{1}{4}\right)}g(\varphi_w(z))d\mu(w).
\end{gather*}
By (\ref{e:g_asymp}) we have $g(z)\geq c|z|^{-2n+2}$ for $|z|\leq\frac{1}{4}$ and some positive constant $c$. Thus,
\begin{gather*}
|u_1(z)|\leq c\int_{B^*\left(z,\frac{1}{4}\right)}\left|\varphi_w(z)\right|^{-2n+2}d\mu(w).
\end{gather*}

Denote $z=r\xi$, where $r=|z|$, $\frac{1}{2}<r<1$ and $w=|w|\eta,~\xi,\eta\in S$. Let $$K(z,\sigma_1,\sigma_2)=\left\{w\in B:|r-|w||\leq\sigma_1, d(\xi,\eta)\leq\sigma_2\right\}.$$ Now let us proof the inclusion
\begin{equation}\label{e:inclusion}
B^*\left(z,\frac{1}{4}\right)\subset K\left(z,c_1(1-r),c_2(1-r)^\frac{1}{2}\right)
\end{equation}
where $c_1,c_2$ are positive constants.
Since (\cite[p.11]{stoll})  for $w
\in B^*(z, \frac14)$
\begin{equation}\label{e:mod_inv}
  \left(1-\frac{(1-|w|^2)(1-r^2)}{|1-\langle z,w\rangle|^2}\right)^\frac{1}{2}=|\varphi_w(z)|<\frac{1}{4},
\end{equation}
$$A(z):=\Bigl\{ w\in B: \left(1-\frac{(1-|w|^2)(1-r^2)}{(1- r|w|)^2}\right)^\frac{1}{2}< \frac
 14 \Bigr\} \supset B^*\Bigl(z, \frac 14\Bigr).$$
In order to find $c_1$ it is enough to check that
$$\partial A(z)\subset \overline{K} (z,c_1(1-r), c_2(1-r)^{\frac12}).$$
 %hen $w=|w|\xi$ and $|\varphi_w(z)|=\frac{1}{4}$ i.e.\\
For $z \in  \partial A(z)$ one has $ \frac{(1-|w|^2)(1-r^2)}{(1-r|w|)^2}=\frac{15}{16}$. Solving this equation, we find
$|w_1|=\frac{4r+1}{r+4},~|w_2|=\frac{4r-1}{4-r}$.
So
\begin{gather*}
|r-|w_1||=|w_1|-r=\frac{1+r}{r+4}(1-r)<\frac{2}{5}(1-r),\\
|r-|w_2||=r-|w_2|=\frac{1+r}{4-r}(1-r)<\frac{2}{3}(1-r).
\end{gather*}
Thus, $|r-|w||< \frac 23 (1-r)$ for $w\in B^*(z, \frac 14)$.
Then, to estimate $c_2$ we deduce
\begin{gather*}
d^2(\xi,\eta)=|1-\langle\xi,\eta\rangle|<\frac{1}{r|w|}|1-r|w|\langle\xi,\eta\rangle|\\
<\frac{1}{r|w|}\left(\frac{16}{15}(1-|w|^2)(1-r^2)\right)^\frac{1}{2}\\
<\frac{1}{r\left(-2/3+5r/3\right)}\left(\frac{16}{15}\left(1-\left(-\frac{2}{3}+
\frac{5}{3}r\right)^2\right)(1-r^2)\right)^{\frac{1}{2}}\\
=\frac{4}{\sqrt{3}r(5r-2)}\left((5r+1)(1+r)\right)^{\frac{1}{2}}(1-r)<32(1-r),~
\end{gather*}
where $\frac{1}{2}<r<1$. So  \eqref{e:inclusion} holds with  $c_1=\frac{2}{3}$ and $c_2=4\sqrt{2}$.
We denote
\begin{gather*}
 K(z):= K\left(z,\frac 23(1-r), 4\sqrt{2} (1-r)^\frac{1}{2}\right),  \\
  \tilde K(z):= K\left(z,\frac 23(1-r), 8\sqrt{2} (1-r)^\frac{1}{2}\right).
\end{gather*}
   H\"{o}lder's inequality and inclusion (\ref{e:inclusion}) imply
\begin{gather*}
I_1:=\int_S|u_1(r\xi)|^pd\sigma(\xi)\\ \leq c_3\int_S\left(\int_{B^*\left(r\xi,\frac{1}{4}\right)}|\varphi_w(r\xi)|^{-2n+2}d\mu(w)\right)^pd\sigma(\xi)\\
\leq c_3 \int_S\int_{B^*\left(r\xi,\frac{1}{4}\right)}|\varphi_w(r\xi)|^{-p(2n-2)}d\mu(w)
\mu^{p-1}\left(B^*\left(r\xi,\frac{1}{4}\right)\right)d\sigma(\xi)\\
\leq c_3 \int\limits_S\int\limits_{K\left(r\xi\right)}\frac{d\mu(w)}{|\varphi_w(r\xi)|^{p(2n-2)} } \mu^{p-1}\left(K\left(r\xi\right)\right)d\sigma(\xi) \\
\le  c_3 \int\limits_S\int\limits_{K\left(r\xi\right)}\frac{\mu^{p-1}\left(\tilde K\left(r\eta\right)\right)}{|\varphi_w(r\xi)|^{p(2n-2)} } d\mu(|w|\eta) d\sigma(\xi)
\end{gather*}
where $c_3=c_3(p)$.
Then, by  Fubini's theorem (cf. the proof of Lemma~\ref{l:cn}) we deduce ($z=r\xi$, $w=|w|\eta$)
\begin{gather} \nonumber
I_1\le %=\int_S|u_1(r\xi)|^pd\sigma(\xi)\\ \leq
c_4(n,p) \iint\limits_{\begin{substack} {\eta\in S\\ ||w|-r|< \frac 23(1-r)\\ d(\xi, \eta)<4\sqrt{2}(1-r)^{1/2}} \end{substack}} \frac{\mu^{p-1}\left(\tilde K\left(r\eta\right)\right)}{|\varphi_w(r\xi)|^{p(2n-2)} } d\mu(|w|\eta) d\sigma(\xi)\\ \le c_4(p,n)
%|\varphi_w(r\xi)|^{-p(2n-2)}d\sigma(\xi)\\
\int\limits_{||w|-r|< \frac 23(1-r)} \mu^{p-1}\left(\tilde K\left(r\eta\right)\right)\int\limits _S \frac{d\sigma(\xi)}{|\varphi_w(r\xi)|^{p(2n-2)}}d\mu(w). \label{e:i1_est1}
\end{gather}
Applying subsequently \eqref{e:mod_inv}, \eqref{e:lem_a2} and  Lemma 5  (\cite{stoll2}), we obtain that for $1<p<\frac{2n-1}{2(n-1)}$
\begin{gather*}%\label{}
  \int _S \frac{d\sigma(\xi)}{|\varphi_w(r\xi)|^{p(2n-2)}} =\int _S \frac{d\sigma(\xi)}{|\varphi_{r\xi}(w)|^{p(2n-2)}}   \\ \le
  \int_Sg^p(\varphi_{r\xi}(w))d\sigma(\xi )\leq\frac{c_5(1-|w|^2)^{np}}{(1-r^2)^{n(p-1)}}, \quad \frac 12 < r<1.
\end{gather*}
Substituting the estimate of the inner integral into \eqref{e:i1_est1} we get
\begin{gather} \nonumber
%\int_S|u_1(r\xi)|^pd\sigma(\xi)
I_1\leq c_4 \int\limits_{||w|-r|< \frac 23 (1-r)} \frac{c_5(1-|w|^2)^{np}}{(1-r^2)^{n(p-1)}}\mu^{p-1}\left(\tilde K\left(r\eta\right)\right)d\mu(|w|\eta)\\
\leq c_6(1-r)^n\int\limits_{||w|-r|< \frac 23 (1-r)} \mu^{p-1}\left(\tilde K\left(r\eta\right)\right)d\mu(|w|\eta). \label{e:i1_fin_est}
\end{gather}
%For $\frac{2n-1}{2(n-1)<p<\frac{2n-1}{2n-3}$ holds \cite{stoll2}
%\begin{gather*}
%\int_S|\varphi_w(z)|^{-2(n-1)p}d\sigma(\xi)\leq\frac{c_6(1-|w|^2)^{np}}{(1-r^2)^{n(p-1)}}
%\frac{(1-r^2)^{(\beta-1)p}}{||w|-r|^{(\beta-1)p}}
%\end{gather*}

To obtain the final estimate of $I_1$, for a fixed $r\in (\frac12, 1)$, we define the  measure $\nu_1$ on the balls by
$$\nu_1(D(\eta,t))=\lambda\Bigl(\Bigr\{\rho\zeta\in B:|\rho-r|<\frac 23(1-r),d(\zeta,\eta)<t\Bigr\}\Bigr).$$
It can be expanded to the family   of all Borel sets on $B$ in the standard way.
It is clear that $$\nu_1(D(\eta,t))\asymp (1-r)^n\mu\Bigl(\Bigl\{\rho\zeta\in B:|\rho-r|<\frac 23(1-r),d(\zeta,\eta)<t\Bigr\}\Bigr).$$
By using of \eqref{e:i1_fin_est} and Lemma \ref{l:cn} we get
\begin{gather*}
I_1 \le \frac{c_7}{(1-r)^{n(p-1)}}   \int\limits_{||w|-r|< \frac 23 (1-r)} \lambda^{p-1}\left(\tilde K\left(r\eta\right)\right)d\lambda(|w|\eta)\\ =
\frac{c_7}{(1-r)^{n(p-1)}}  \int_S\nu_1^{p-1}\left(D(\eta, 8\sqrt{2}(1-r)^{\frac{1}{2}})\right)d\nu_1(\eta)\\ \allowdisplaybreaks
\leq\frac{c_7  N^p}{(128)^{n}(1-r)^{np}}\int_S\nu_1^p\left(D(\eta, 8\sqrt{2}(1-r)^{\frac{1}{2}})\right)d\sigma(\eta)\\
=\frac{c_8(n,p)}{(1-r)^{np}}\int_S\lambda^p\left( \tilde K(r\eta)\right)d\sigma(\eta).
\end{gather*}

Note that if $\rho\zeta\in \tilde{K}(r\eta)$  then
\begin{equation}\label{e:anis_inclus}
|1-\langle \rho\zeta ,\eta\rangle|\leq\left|1-\left\langle {\zeta},\eta\right\rangle\right|+(1-\rho)\left|\left\langle \zeta,\eta\right\rangle\right|\leq (4c_2^2+c_1+1)(1-r).
  \end{equation}
Hence, by the assumption of the theorem
\begin{gather} \nonumber
I_1\leq c_8(1-r)^{-np}\int_S\lambda^p\left(C(\eta,(4c_2^2+c_1+1)(1-r))\right)d\sigma(\eta)\\ \leq
%c_7(1-r)^{-np}(1-r)^{\gamma p}\\
c_{9}(1-r)^{p(\gamma-n)}. \label{e:i1est}
\end{gather}
Let us estimate $$u_2(z)=\int_{B}G(z,w)(1-|w|)^{-n}d\tilde{\lambda}(w)$$ where
$d\tilde{\lambda}(w)=(1-|w|)^n\chi_{B\setminus B^*\left(z,\frac{1}{4}\right)}(w)d\mu(w)$, $\chi_E$ is the characteristic function of a set $E$.
We may assume that $|z|\ge \frac12$.

We denote $$E_k=E_k(z)=\left\{w\in B:\left|1-\left\langle\frac{z}{|z|},w\right\rangle\right|<2^{k+1}(1-|z|)\right\}, \quad
k\in\mathbb{Z}_+.$$
Then for $w\in E_{k+1}(z)\setminus E_k(z),~\frac{1}{2}\leq|z|<1$
$$|1-\langle z,w\rangle|\geq|z|\left|1-\left\langle\frac{z}{|z|},w\right\rangle\right|-(1-|z|)\geq\left(|z|2^{k+1}-1\right)(1-|z|).$$
Combining  Lemma A with the equality in   \eqref{e:mod_inv} for $z\in B$ such that $|z|\ge\frac{1}{2}$ we get that
$0\leq G(z,w)\leq c_{10}\left(\frac{(1-|w|^2)(1-|z|^2)}{|1-\langle z,w\rangle|^2}\right)^n$ holds. So
\begin{gather*}
|u_2(z)|\leq c_{10}\int_B\left(\frac{(1+|w|)(1-|z|^2)}{|1-\langle z,w\rangle|^2}\right)^nd\tilde{\lambda}(w)\\
\leq \sum_{k=1}^{[\log _2 \frac 1{1-r}]} c_{10}\int_{E_{k+1}\setminus E_k}\left(\frac{(1+|w|)(1-|z|^2)}{(|z|2^{k+1}-1)^2(1-|z|)^2}\right)^nd\tilde{\lambda}(w) \\ +
c_{10}\int_{E_1}\left(\frac{(1+|w|)(1-|z|^2)}{(1-|z|)^2}\right)^nd\tilde{\lambda}(w)\\
\leq\sum_{k=1}^\infty \int_{E_{k+1}\setminus E_k}\frac{4^nc_{10}}{\left(2^{2(k-1)}(1-|z|)\right)^n}d\tilde{\lambda}(w)+
\int_{E_1}\frac{4^nc_{10}}{(1-|z|)^n}d\tilde{\lambda}(w)\\
\leq \frac{4^nc_{10}}{(1-|z|)^n}\left(\sum_{k=1}^\infty\frac{\tilde{\lambda}\left(E_{k+1}\right)}{2^{2n(k-1)}}+
\tilde{\lambda}\left(E_1\right)\right)
\leq\frac{4^nc_{10}}{(1-|z|)^n}\sum_{k=1}^\infty\frac{\tilde{\lambda}\left(E_{k}\right)}{2^{2n(k-2)}}.
\end{gather*}
Fix any $\alpha\in (\frac{\gamma}{n}, 2)$. By  H\"{o}lder's inequality ($\frac{1}{p}+\frac{1}{q}=1$)
\begin{gather} \nonumber
|u_2(z)|^p\leq\frac{4^{np}c^p_{10}}{(1-|z|)^{np}}\sum_{k=1}^\infty
\frac{\tilde{\lambda}^p\left(E_{k}\right)}{2^{\alpha np(k-2)}}
\left(\sum_{k=1}^\infty\frac{1}{2^{{(2-\alpha)nq(k-2)}}}\right)^{p/q}\\ %\nonumber
=\frac{4^{np}c^p_{10}}{(1-|z|)^{np}}\frac{2^{4np}}{\left(2^{{(2-\alpha)nq}}-1\right)^{p/q}}
\sum_{k=1}^\infty\frac{\tilde{\lambda}^p\left(E_k\right)}{2^{{\alpha npk}}}
=\frac{c_{11}(n,p, \alpha)}{(1-|z|)^{np}}
\sum_{k=1}^\infty\frac{\tilde{\lambda}^p\left(E_k\right)}{2^{{\alpha npk}}}. \label{e:u2_holder}
\end{gather}
Therefore
\begin{gather*}
\int_S|u_2(r\xi)|^pd\sigma(\xi)\leq\frac{c_{11}}{(1-r)^{np}}
\sum_{k=1}^\infty\int_S\frac{\tilde{\lambda}^p\left(E_k(r\xi)\right)}{2^{{\alpha npk}}}d\sigma(\xi)\\
=\frac{c_{11}}{(1-r)^{np}}
\sum_{k=1}^\infty\frac{1}{2^{\alpha{npk}}}
\int_S\tilde{\lambda}^p\left(C\left(\frac{z}{|z|},2^{k+1}(1-r)\right)\right)d\sigma(\xi)\\
\leq\frac{c_{12}}{(1-r)^{np}}\sum_{k=1}^\infty\frac{2^{p\gamma(k+1)}(1-r)^{\gamma p}}{2^{\alpha npk}}\\
=\frac{c_{12}}{(1-r)^{p(n-\gamma)}}\frac{2^{p\gamma}}{2^{p(\alpha n-\gamma)}-1}=\frac{c_{13}(n,p,\gamma)}{(1-r)^{p(n-\gamma)}}.
\end{gather*}
The latter inequality together with \eqref{e:i1est} completes the proof of the sufficiency.

{\it Necessity.}
%$(\Leftarrow)$
By Lemma A
\begin{gather*}
G_\mu(r\xi)=\int_Bg(\varphi_w(r\xi))d\mu(w)\geq
\int_B\frac{n+1}{4n^2}\frac{(1-|w|^2)^n(1-r^2)^n}{|1-\langle r\xi,w\rangle|^{2n}}d\mu(w)\\
\geq\int_{C(\xi,1-r)}\frac{n+1}{4n^2}\frac{(1-|w|^2)^n(1-r^2)^n}{|1-\langle r\xi,w\rangle|^{2n}}d\mu(w)\\
=\int_{C(\xi,1-r)}\frac{n+1}{4n^2}\frac{(1+|w|)^n(1-r^2)^n}{|1-\langle r\xi,w\rangle|^{2n}}d\lambda(w).
\end{gather*}

Since for $w\in C(\xi,1-r)$
$$|1-\langle z,w\rangle|\leq\left|1-\left\langle \frac{z}{|z|},w\right\rangle\right|+\left|\left\langle \frac{z}{|z|}-z,w\right\rangle\right|\leq 2(1-|z|),$$
we have
\begin{gather*}
|G_\mu(r\xi)|\geq\frac{n+1}{4^{n+1}n^2}\frac {\lambda(C(\xi,1-r))}{(1-r)^{n}}.
\end{gather*}

From the assumption of the theorem it follows that
\begin{gather*}
\left(\frac{n+1}{2^{2(n+1)}n^2}\right)^p\int_{S}\frac{\lambda^p(C(\xi,1-r))}{(1-r)^{np}}d\sigma(\xi)\\
\leq\int_{S}|G_\mu(r\xi)|^pd\sigma(\xi)\leq c_{13}^p(1-r)^{p(\gamma-n)}.
\end{gather*}

Thus
$$\int_S\lambda^p(C(\xi,1-r))d\sigma(\xi)\leq c_{13}^p(1-r)^{p\gamma}, \quad 0<r<1.$$
\end{proof}

\begin{proof}[Proof of Theorem \ref{t:smallo}]
  The proof of the {\it necessity} literally repeats that of Theorem~\ref{t:main}.

  In the {\it sufficiency}  part the estimate of $u_1$ is quite similar. In order to estimate $u_2$ we note that, by the definition of $E_k(z)$,
  $$ 1-|w|\leq\left|1-\left\langle \frac{z}{|z|},w\right\rangle\right| \le 2^{k+1} (1-|z|) \le 2\sqrt{1-|z|},$$
   for $w
   \in E_k(z)$, $1\le k\le \frac 12 \log_2 \frac{1}{1-|z|}.$ Thus \eqref{e:osmooth} implies as $r\to1-$
      \begin{equation}\label{e:small_k}
\sum_{k=1}^{\bigl[\frac 12 \log_2 \frac{1}{1-r}\bigr]}  \int_S \tilde{\lambda}^p\left(E_k(r\xi)\right) d\sigma(\xi)= o\biggl(
   \sum_{k=1}^{\bigl[\frac 12 \log_2 \frac{1}{1-r}\bigr]} 2^{p\gamma(k+1)} (1-r)^{\gamma p} \biggr) .
      \end{equation}
   Applying estimates \eqref{e:u2_holder} and \eqref{e:small_k} we deduce
   \begin{gather*}
\int_S|u_2(r\xi)|^pd\sigma(\xi) \\ \leq \frac{c_{12}(n,p)}{(1-r)^{np}}
\biggl( \sum_{k=1}^{\bigl[\frac 12 \log_2 \frac{1}{1-r}\bigr]}+ \sum_{k=\bigl[\frac 12 \log_2 \frac{1}{1-r}\bigr]+1}^{\infty} \biggr) \int_S\frac{\tilde{\lambda}^p\left(E_k(r\xi)\right)}{2^{{\alpha npk}}}d\sigma(\xi)\\
=\frac{o(1)}{(1-r)^{np-\gamma p}}
\sum_{k=1}^{\bigl[\frac 12 \log_2 \frac{1}{1-r}\bigr]}\frac{2^{p\gamma(k+1)}}{2^{{\alpha npk}}} \\ +
\frac{c_{13}}{(1-r)^{np-\gamma p}} \sum_{k=\bigl[\frac 12 \log_2 \frac{1}{1-r}\bigr]+1}^{\infty} \frac{2^{p\gamma(k+1)}}{2^{\alpha npk}}\\
=\frac{o(1)}{(1-r)^{p(n-\gamma)}} + \frac{c_{14}}{(1-r)^{p(n-\gamma)}} \frac{1}{(1-r)^{\frac p2 (\gamma- \alpha n)}}\\ =\frac{o(1)}{(1-r)^{p(n-\gamma)}}, \quad r\uparrow 1.
\end{gather*}
   Since the integral  of  $|u_1|^p$ admits the same estimate, it completes the proof of Theorem \ref{t:smallo}.
  \end{proof}

  \begin{proof}[Proof of  Proposition \ref{p:smooth}] %We prove that
%where $0<p<n$???.
Let   $\nu(B)=M>0$. Applying Fubini's theorem (cf. the proof of Lemma \ref{l:cn}) we deduce
\begin{gather*}
\int_S\nu^p(C(\xi, \delta))d\sigma(\xi)\leq
M^{p-1}\int_S\nu(C(\xi, \delta))d\sigma(\xi)\\
=M^{p-1}\int_S\int_{C(\xi, \delta)}d\nu(z)\, d\sigma(\xi)\\ \le M^{p-1} 2^{2n-1} \int_{1-\delta\le |z|<1} d\nu(z)\int_{C(z, \delta)}  d\sigma(\xi) \\
\le c M^{p-1}  2^{2n-1} \delta^n \int_{1-\delta\le |z|<1} d\nu(z)= o(\delta^n), \quad \delta\downarrow 0. \end{gather*}

%It remains to apply Theorem 1.
%Let $z=|z|\zeta$ then
%\begin{gather*}
%\int_S\mu^p(C(\xi,\delta))d\sigma(\xi)\leq
%c^{p-1}\int_{\{z\in B:|z|>1-\delta\}}d\mu(z)\int_{\{\zeta\in S:d(\zeta,\xi)<\delta^{1/2}\}}d\sigma(\zeta)\\
%=c^{p-1}\int_{\{z\in B:|z|>1-\delta\}}d\mu(z)\delta^{\frac{2n}{2}}\leq c^p\delta^n.
%\end{gather*}
\end{proof}
\begin{proof}[An alternative proof of Theorem \ref{t:b}(1)]
 Suppose that the assump\-ti\-ons of Theorem \ref{t:b} holds.
We set $d\nu(w)=(1-|w|)^\beta \, d\mu(w)$.
We deduce \begin{gather*}
          \lambda(C(\xi, \delta)) =\int_{C(\xi, \delta)} (1-|w|)^n d\mu(w)\\
\le C \delta^{n-\beta} \int_{C(\xi, \delta)} (1-|w|)^{\beta} d\mu(w)\le C \delta^{n-\beta} \nu(C(\xi, \delta)).
         \end{gather*}
 By Proposition \ref{p:smooth}, $\int_S\nu^p(C(\xi, \delta))d\sigma(\xi)=o(\delta^n),$ $\delta \to 0+$.
Hence,
$$\biggl(\int_S \lambda^p(C(\xi, \delta))\, d\sigma(\xi)\biggr)^{\frac1p}=o(\delta^{n-\beta+\frac np}), \quad
\delta \to 0+.$$
Since $n-\beta+\frac np<2n$, by Theorem \ref{t:smallo} we get the  assertion of Theorem~\ref{t:b}(1).
\end{proof}

\section{Further results}
Let  $\Phi\colon [0,1]\to [0,+\infty)$ be an increasing function such that  for some $\gamma\in (0,2n)$ and all $t, \delta$ with  $0<\delta<1$, $0<t\delta< 1$ we have  $\Phi(t\delta)\le t^{\gamma} \Phi(\delta)$.
Applying the similar arguments one can show that
\begin{equation*}%\label{e:growth}
m_p\left(r,G_\mu\right)=O\left(\frac{\Phi (1-r)}{(1-r)^n}\right),~r\uparrow 1
\end{equation*}
holds if and only if
\begin{equation*}%\label{e:smooth}
\left(\int_S\lambda^p\left(C(\xi,\delta)\right)d\sigma(\xi)\right)^{\frac{1}{p}}=O\left(\Phi(\delta)\right),~0<\delta<1.
\end{equation*}

%\textbf{References}

{Faculty of Mechanics and Mathematics,
Ivan Franko National University of Lviv,
 Universytets'ka 1,
79000, Lviv}

{chyzhykov@yahoo.com}, {urkevych@gmail.com} 
\end{document}